%
%
%


\documentclass{conm-p-l}

\usepackage{amssymb}

\usepackage{graphicx}
\usepackage{subfigure}
\usepackage{float}
\usepackage{scalefnt}
\usepackage{url}
\usepackage{color}


\usepackage{}


\newtheorem{theorem}{Theorem}[section]
\newtheorem{lemma}[theorem]{Lemma}

\theoremstyle{definition}
\newtheorem{definition}[theorem]{Definition}

\theoremstyle{remark}
\newtheorem{remark}[theorem]{Remark}
\newtheorem{proposition}[theorem]{Proposition}

\numberwithin{equation}{section}

\begin{document}

 \title[SOS Basis Pursuit with Linear and Second Order Cone Programming]{Sum of Squares Basis Pursuit \\with Linear and Second Order Cone Programming}


\author{Amir Ali Ahmadi}
\address{ORFE, Princeton University, Sherrerd Hall, Charlton Street, Princeton 08540, USA}
\curraddr{}
\email{a\_a\_a@princeton.edu}
\thanks{}

\author{Georgina Hall}
\address{ORFE, Princeton University, Sherrerd Hall, Charlton Street, Princeton 08540, USA}
\curraddr{}
\email{gh4@princeton.edu}
\thanks{The authors are partially supported by the Young Investigator Program award of the AFSOR}


\date{}

\begin{abstract}
We devise a scheme for solving an iterative sequence of linear programs (LPs) or second order cone programs (SOCPs) to approximate the optimal value of {\gh any semidefinite program (SDP) or sum of squares (SOS) program}. The first LP and SOCP-based bounds in the sequence come from the recent work of Ahmadi and Majumdar on diagonally dominant sum of squares (DSOS) and scaled diagonally dominant sum of squares (SDSOS) polynomials. We then iteratively improve on these bounds by pursuing better bases in which more relevant SOS polynomials admit a DSOS or SDSOS representation. Different interpretations of the procedure from primal and dual perspectives are given. While the approach is applicable to {\gh SDP} relaxations of general polynomial programs, we apply it to two problems of discrete optimization: the maximum independent set problem and the partition problem. We further show that some completely trivial instances of the partition problem lead to strictly positive polynomials on the boundary of the sum of squares cone and hence make the SOS relaxation fail.
\end{abstract}

\maketitle

\section{Introduction}
In recent years, semidefinite programming~\cite{VaB:96} and sum of squares optimization~\cite{sdprelax, lasserre_moment, NesterovSquared} have proven to be powerful techniques for tackling a diverse set of problems in applied and computational mathematics. The reason for this, at a high level, is that several fundamental problems arising in discrete and polynomial optimization~\cite{Monique_sos_moments_survey, Stability_number_SOS, OR_isos_OptLetters} or the theory of dynamical systems~\cite{PhD:Parrilo, PositivePolyInControlBook, AAA_PhD} can be cast as linear optimization problems over the cone of nonnegative polynomials. This observation puts forward the need for efficient conditions on the coefficients $c_\alpha\mathrel{\mathop:}=c_{\alpha_1,\ldots,\alpha_n}$ of a multivariate polynomial $$p(x)=\sum_{\alpha } c_{\alpha_1,\ldots,\alpha_n} x_1^{\alpha_1}\ldots x_n^{\alpha_n}$$
that ensure the inequality $p(x)\geq 0,$ for all $x\in\mathbb{R}^n$. If $p$ is a quadratic function, $p(x)=x^TQx+2c^Tx+b,$ then nonnegativity of $p$ is equivalent to the $(n+1)\times (n+1)$ symmetric matrix $$\begin{pmatrix}
Q & c\\ c^T & b
\end{pmatrix}$$
being positive semidefinite and this constraint can be imposed by semidefinite programming. For higher degrees, however, imposing nonnegativity of polynomials is in general an intractable computational task. In fact, even checking if a given quartic polynomial is nonnegative is NP-hard~\cite{copositivity_NPhard}. A particularly popular and seemingly powerful sufficient condition for a polynomial $p$ to be nonnegative is for it to decompose as a sum of squares of other polynomials:
$$p(x)=\sum_i q_i^2(x).$$

This condition is attractive for several reasons. From a computational perspective, for fixed-degree polynomials, a sum of squares decomposition can be checked (or imposed as a constraint) by solving a semidefinite program of size polynomial in the number of variables. From a representational perspective, such a decomposition \emph{certifies} nonnegativity of $p$ in terms of an easily verifiable algebraic identity. From a practical perspective, the so-called ``sum of squares relaxation'' is well-known to produce powerful (often exact) bounds on optimization problems that involve nonnegative polynomials; see, e.g.,~\cite{Minimize_poly_Pablo}. The reason for this is that constructing examples of nonnegative polynomials that are not sums of squares in relatively low dimensions and degrees seems to be a difficult task\footnote{See~\cite{Reznick} for explicit examples of nonnegative polynomials that are not sums of squares.}, especially when additional structure arising from applications is required.

We have recently been interested in leveraging the attractive features of semidefinite programs (SDPs) and sum of squares (SOS) programs, while solving much simpler classes of convex optimization problems, namely \emph{linear programs} (LPs) and \emph{second order cone programs} (SOCPs). {\gh Such a research direction} can potentially lead to a better understanding of the relative power of different classes of convex relaxations. It also has obvious practical motivations as simpler convex programs come with algorithms that have better scalability and improved numerical conditioning properties. This paper is a step in this research direction. We present a scheme for solving a sequence of LPs or SOCPs that provide increasingly accurate approximations to the optimal value and the optimal solution of a semidefinite (or a sum of squares) program. With the algorithms that we propose, one can use one of many mature LP/SOCP solvers such as~\cite{cplex, gurobi, mosek}, including simplex-based LP solvers, to obtain reasonable approximations to the optimal values of these more difficult convex optimization problems.


The intuition behind our approach is easy to describe with a contrived example.
%
%
%
Suppose we would like to show that the degree-4 polynomial 
\begin{equation}\nonumber
\begin{array}{lll}
p(x)&=&x_1^4-6x_1^3x_2+2x_1^3x_3+6x_1^2x_3^2+9x_1^2x_2^2-6x_1^2x_2x_3-14x_1x_2x_3^2+4x_1x_3^3
\\ \ &\ &+5x_3^4-7x_2^2x_3^2+16x_2^4
\end{array}
\end{equation}
has a sum of squares decomposition. One way to do this is to attempt to write $p$ as $${\gh p(x)=z^T(x)Qz(x),}$$
where
\begin{equation}\label{eq:z(x).quadatic}
z(x)=(x_1^2,x_1x_2,x_2^2,x_1x_3,x_2x_3,x_3^2)^T
\end{equation}  
is the standard (homogeneous) monomial basis of degree 2 and the matrix $Q$, often called the \emph{Gram matrix}, is symmetric and positive semidefinite. The search for such a $Q$ can be done with semidefinite programming; one feasible solution e.g. is as follows:

$$Q=\begin{pmatrix}
1 & -3 & 0 & 1 & 0 & 2 \\
-3 & 9 & 0 & -3 & 0 & -6 \\
0 & 0 & 16 & 0 & 0 & -4 \\
1 & -3 & 0 & 2 & -1 & 2 \\
0 & 0 & 0 & -1 & 1 & 0 \\
2 & -6 & 4 & 2 & 0 & 5 
\end{pmatrix}.$$
Suppose now that instead of the basis $z$ in (\ref{eq:z(x).quadatic}), we pick a different basis 
\begin{equation}\label{eq:ztilde}
{\gh \tilde{z}(x)=(2x_1^2-6x_1x_2+2x_1x_3+2x_3^2, \quad x_1x_3-x_2x_3, \quad x_2^2-\frac{1}{4}x_3^2)^T.}
\end{equation}
With this new basis, we can get a sum of squares decomposition of $p$ by writing it as 
$$p(x)=\tilde{z}^T(x) \begin{pmatrix}
\frac{1}{2} & 0 & 0 \\ 0& 1 & 0 \\ 0 & 0 & 4.
\end{pmatrix}  \tilde{z}(x).$$
In effect, by using a better basis, we have simplified the Gram matrix and made it diagonal. When the Gram matrix is diagonal, its positive semidefiniteness can be imposed as a \emph{linear} constraint (diagonals should be nonnegative).

Of course, the catch here is that we do not have access to the magic basis $\tilde{z}(x)$ in (\ref{eq:ztilde}) a priori. Our goal will hence be to ``pursue'' this basis (or other good bases) by starting with an arbitrary basis (typically the standard monomial basis), and then iteratively improving it by solving a sequence of LPs or SOCPs and performing some efficient matrix decomposition tasks in the process. Unlike the intentionally simplified example we gave above, we will not ever require our Gram matrices to be diagonal. This requirement is too strong and would frequently lead to our LPs and SOCPs being infeasible. The underlying reason for this is that the cone of diagonal matrices is not full dimensional in the cone of positive semidefinite matrices. Instead, we will be after bases that allow the Gram matrix to be \emph{diagonally dominant} or \emph{scaled diagonally dominant} (see Definition~\ref{def:dd.sdd}). The use of these matrices in polynomial optimization has recently been proposed by Ahmadi and Majumdar~\cite{iSOS_journal, dsos_ciss14}. We will be building on and improving upon their results in this paper.

\subsection{Organization of this paper}
The organization of the rest of the paper is as follows. In Section~\ref{sec:prelims}, we introduce some notation and briefly review the concepts of ``dsos and sdsos polynomials'' which are used later as the first step of an iterative algorithm that we propose in Section~\ref{sec:basis.pursuit}. In this section, we explain how we inner approximate semidefinite (Subsection~\ref{subsec:InnerApprox}) and sum of squares (Subsection~\ref{subsec:polyopt}) cones with LP and SOCP-based cones by iteratively changing bases. In Subsection~\ref{subsec:Corners}, we give a different interpretation of our LPs in terms of their corner description as opposed to their facet description. Subsection~\ref{subsec:OuterApprox} is about duality, which is useful for iteratively outer approximating semidefinite or sum of squares cones.

In Section~\ref{sec:StableSet}, we apply our algorithms to the Lov\'{a}sz semidefinite relaxation of the maximum stable set problem. It is shown numerically that our LPs and SOCPs converge to the SDP optimal value in very few iterations and outperform some other well-known LP relaxations on a family of randomly generated examples. In Section~\ref{sec:Partition}, we consider the partition problem from discrete optimization. As opposed to the stable set problem, the quality of our relaxations here is rather poor. In fact, even the sum of squares relaxation fails on some completely trivial instances. We show this empirically on random instances, and formally prove it on one representative example (Subsection~\ref{subsec:sos.refutability}). The reason for this failure is existence of a certain family of quartic polynomials that are nonnegative but not sums of squares.


\section{Preliminaries}\label{sec:prelims}
We denote the set of real symmetric $n\times n$ matrices by $S_n$. Given two matrices $A$ and $B$ in $S_n$, their standard matrix inner product is denoted by $A\cdot B := \sum_{i,j}A_{ij}B_{ij} = \mbox{ Trace}(AB)$.
A symmetric matrix $A$ is \emph{positive semidefinite} (psd) if $x^TAx\geq 0$ for all $x\in\mathbb{R}^n$; this will be denoted by the standard notation $A\succeq 0$, and our notation for the set of $n\times n$ psd matrices is $P_n$. We say that $A$ is \emph{positive definite} (pd) if $x^TAx> 0$ for all $x\neq 0$.
Any psd matrix $A$ has an upper triangular Cholesky factor $U=\text{chol}(A)$ satisfying $A=U^TU$. When $A$ is pd, the Cholesky factor is unique and has positive diagonal entries.
For a cone of matrices in $S_n$, we define its dual cone $\mathcal{K}^*$ as $\{Y \in S_n: Y\cdot X \geq 0, \ \forall X \in \mathcal{K}\}$.

For a vector variable $x \in \mathbb{R}^n$ and a vector {\gh $s \in \mathbb{Z}^n_+$, let a monomial in $x$ be denoted as $x^s= \Pi_{i=1}^n x_i^{s_i}$ which by definition has degree $\sum_{i=1}^n s_i$.}
A polynomial is said to be \emph{homogeneous} or a \emph{form} if all of its monomials have the same degree. A form $p(x)$ in $n$ variables is nonnegative if $p(x)\geq 0$ for all $x\in\mathbb{R}^n$, or equivalently for all $x$ on the unit sphere in $\mathbb{R}^n$. The set of nonnegative (or positive semidefinite) forms in $n$ variables and degree $d$ is denoted by $PSD_{n,d}$. A form $p(x)$ is a \emph{sum of squares} (sos) if it can be written as $p(x)=\sum_{i=1}^r q_i^2(x)$ for some forms $q_1,\ldots,q_r$. The set of sos forms in $n$ variables and degree $d$ is denoted by $SOS_{n,d}$. We have the obvious inclusion $SOS_{n,d}\subseteq PSD_{n,d}$, which is strict unless $d=2$, or $n=2$, or $(n,d)=(3,4)$~\cite{Hilbert_1888}. Let $z(x,d)$ be the vector of all monomials of degree exactly $d$; it is well known that a form $p$ of degree $2d$ is sos if and only if it can be written as $p(x)=z^T(x,d)Qz(x,d)$, for some psd matrix $Q$~\cite{sdprelax, PhD:Parrilo}. An SOS optimization problem is the problem of minimizing a linear function over the intersection of the convex cone $SOS_{n,d}$ with an affine subspace. The previous statement implies that SOS optimization problems can be cast as semidefinite programs.


%

\subsection{DSOS and SDSOS optimization}
\label{subsec:dsos.sdsos}

In recent work, Ahmadi and Majumdar introduce more scalable alternatives to SOS optimization that they refer to as \emph{DSOS and SDSOS programs}~\cite{iSOS_journal,dsos_ciss14}\footnote{The work in~\cite{iSOS_journal} is currently in preparation for submission; the one in~\cite{dsos_ciss14} is a shorter conference version of~\cite{iSOS_journal} which has already appeared. The presentation of the current paper is meant to be self-contained.}. Instead of semidefinite programming, these optimization problems can be cast as linear and second order cone programs respectively. Since we will be building on these concepts, we briefly review their relevant aspects to make our paper self-contained.



The idea in~\cite{iSOS_journal, dsos_ciss14}  is to replace the condition that the Gram matrix $Q$ be positive semidefinite with stronger but cheaper conditions in the hope of obtaining more efficient inner approximations to the cone $SOS_{n,d}$. Two such conditions come from the concepts of \emph{diagonally dominant} and \emph{scaled diagonally dominant} matrices in linear algebra. We recall these definitions below.

\begin{definition}\label{def:dd.sdd}
A symmetric matrix $A$ is \emph{diagonally dominant} (dd) if $a_{ii} \geq \sum_{j \neq i} |a_{ij}|$ for all $i$. We say that $A$ is \emph{scaled diagonally dominant} (sdd) if there exists a diagonal matrix $D$, with positive diagonal entries, which makes $DAD$ diagonally dominant.
\end{definition}

We refer to the set of $n \times n$ dd (resp. sdd) matrices as $DD_n$ (resp. $SDD_n$). The following inclusions are a consequence of Gershgorin's circle theorem {\gh\cite{gersh}}:
$$DD_n\subseteq SDD_n\subseteq P_n.$$

Whenever it is clear from the context, we may drop the subscript $n$ from our notation. We now use these matrices to introduce the cones of ``dsos'' and ``sdsos'' forms which constitute special subsets of the cone of sos forms. We remark that in the interest of brevity, we do not give the original definition of dsos and sdsos polynomials as it appears in~\cite{iSOS_journal} (as sos polynomials of a particular structure), but rather an equivalent characterization of them that is more useful for our purposes. The equivalence is proven in~\cite{iSOS_journal}. 


\begin{definition}[\cite{iSOS_journal,dsos_ciss14}] \label{def:dsos.sdsos.rdsos.rsdsos}
	Recall that $z(x,d)$ denotes the vector of all monomials of degree exactly $d$. A form $p(x)$ of degree $2d$ is said to be
	
	\begin{itemize}
		\item  \emph{diagonally-dominant-sum-of-squares} (dsos) if it admits a representation as $p(x)=z^T(x,d)Qz(x,d)$, where $Q$ is a dd matrix.
		\item  \emph{scaled-diagonally-dominant-sum-of-squares} (sdsos) if it admits a representation as $p(x)=z^T(x,d)Qz(x,d)$, where $Q$ is an sdd matrix.
	\end{itemize}
\end{definition}

The definitions for non-homogeneous polynomials are exactly the same, except that we replace the vector of monomials of degree exactly $d$ with the vector of monomials of degree $\leq d$. We observe that a quadratic form $x^TQx$ is dsos/sdsos/sos if and only if the matrix $Q$ is dd/sdd/psd. Let us denote the cone of forms in $n$ variables and degree $d$ that are dsos and sdsos by $DSOS_{n,d}$, $SDSOS_{n,d}$. The following inclusion relations are straightforward: $$DSOS_{n,d}\subseteq SDSOS_{n,d}\subseteq SOS_{n,d}\subseteq PSD_{n,d}.$$




From the point of view of optimization, our interest in all of these algebraic notions stems from the following theorem.

\begin{theorem}[\cite{iSOS_journal,dsos_ciss14}] For any fixed $d$, optimization over the cones $DSOS_{n,d}$ (resp. $SDSOS_{n,d}$) can be done with linear programming (resp. second order cone programming) of size polynomial in $n$.
\end{theorem}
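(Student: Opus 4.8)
The plan is to exhibit, for each of the two cones, an explicit reformulation of the membership condition (hence of the associated linear optimization problem) as a linear program, respectively a second order cone program, whose numbers of variables and constraints are bounded by a polynomial in $n$ once $d$ is fixed. First I would dispose of the affine part that is common to both cases: writing $p(x)=z^T(x,d)Qz(x,d)$ and equating coefficients of like monomials on the two sides is a finite system of linear equations in the entries of the symmetric matrix $Q$. The monomial vector $z(x,d)$ has $N\mathrel{\mathop:}=\binom{n+d-1}{d}$ entries, and the number of distinct monomials of degree $2d$ in $n$ variables is $\binom{n+2d-1}{2d}$; for fixed $d$ these are $O(n^d)$ and $O(n^{2d})$ respectively, hence polynomial in $n$. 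So each problem reduces to minimizing a linear functional of $Q$ subject to this coefficient-matching system together with $Q\in DD_N$ (the dsos case) or $Q\in SDD_N$ (the sdsos case), and it remains to describe these two matrix cones by linear, respectively second order cone, constraints of size $O(N^2)$.

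For $DD_N$ I would linearize the absolute values in the defining inequalities $q_{ii}\ge\sum_{j\neq i}|q_{ij}|$: introduce auxiliary scalar variables $t_{ij}$ for $i\neq j$, impose the linear constraints $-t_{ij}\le q_{ij}\le t_{ij}$ and $q_{ii}\ge\sum_{j\neq i}t_{ij}$, and note that a symmetric matrix is dd precisely when such $t_{ij}$ exist. This adds $O(N^2)$ variables and $O(N^2)$ linear inequalities, so the dsos problem becomes a linear program of size polynomial in $n$.

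For $SDD_N$ the substantive ingredient is the structural characterization (established in~\cite{iSOS_journal}) that $A\in SDD_N$ if and only if $A=\sum_{1\le i<j\le N}M^{ij}$, where each $M^{ij}\in S_N$ vanishes outside its principal $2\times2$ submatrix on rows and columns $\{i,j\}$, and that $2\times2$ submatrix is positive semidefinite. The easy direction is that such a padded $2\times2$ psd block is itself sdd — for a block $\left(\begin{smallmatrix}a&b\\ b&c\end{smallmatrix}\right)$ one scales coordinates $i$ and $j$ by a ratio lying in the nonempty interval $[\,|b|/a,\ c/|b|\,]$ and uses the identity scaling elsewhere — so, $SDD_N$ being a convex cone, any such sum lies in $SDD_N$. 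For the reverse direction, given a positive diagonal $D$ with $B\mathrel{\mathop:}= DAD$ diagonally dominant, decompose $B$ as $\sum_i\bigl(b_{ii}-\sum_{j\neq i}|b_{ij}|\bigr)E_{ii}+\sum_{i<j}|b_{ij}|\bigl(\begin{smallmatrix}1&\mathrm{sgn}(b_{ij})\\ \mathrm{sgn}(b_{ij})&1\end{smallmatrix}\bigr)$, where $E_{ii}$ has a single $1$ in position $(i,i)$ and the $2\times2$ blocks are padded with zeros; every summand is psd by the diagonal dominance of $B$, and conjugating this identity by $D^{-1}$ preserves both psd-ness and $2\times2$ support, yielding the claimed decomposition of $A$. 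Finally, a symmetric matrix $\left(\begin{smallmatrix}a&b\\ b&c\end{smallmatrix}\right)$ is psd if and only if $\bigl\|(a-c,\ 2b)\bigr\|_2\le a+c$, a single second order (Lorentz) cone constraint. Hence $Q\in SDD_N$ is encoded by $O(N^2)$ new scalar variables (the entries of the blocks $M^{ij}$), $O(N^2)$ linear equalities ($Q=\sum_{i<j}M^{ij}$, entrywise), and $O(N^2)$ second order cone constraints; together with the coefficient-matching equalities this is a second order cone program of size polynomial in $n$.

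I expect the only non-routine step to be the structural characterization of $SDD_N$ as the set of sums of $2\times2$-supported psd matrices, together with a little care for degenerate blocks (a vanishing diagonal entry, which by positive semidefiniteness forces the whole block to vanish, so that the interval $[\,|b|/a,\ c/|b|\,]$ is interpreted after discarding such blocks); this is handled by a routine limiting argument, or by observing that the set of such sums is closed and convex. Everything else — counting monomials, rewriting $|q_{ij}|$ via $\pm t_{ij}$, and expressing $2\times2$ positive semidefiniteness as a norm inequality — is textbook.
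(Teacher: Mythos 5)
Your proposal follows essentially the same route as the paper: match coefficients of $p(x)=z^T(x,d)Qz(x,d)$ to obtain polynomially many linear equalities in the entries of a Gram matrix whose size is polynomial in $n$ for fixed $d$, encode diagonal dominance of $Q$ by linear inequalities (your auxiliary variables $t_{ij}$ just make explicit what the paper leaves implicit), and encode scaled diagonal dominance via the decomposition $Q=\sum_{i<j}M^{ij}$ into $2\times 2$-supported psd blocks, each imposed as a rotated quadratic cone constraint. The one substantive difference is that the paper takes the characterization of $SDD_N$ as sums of such blocks as a black box from \cite{iSOS_journal}, whereas you attempt to prove it. Your argument for the direction ``sdd $\Rightarrow$ decomposable'' (split the diagonally dominant matrix $DAD$ into its diagonal surplus plus $|b_{ij}|$ times sign-matched $2\times 2$ blocks, then conjugate by $D^{-1}$) is correct and standard. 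The converse, however, you justify by noting that each padded psd block is individually sdd and then invoking convexity of $SDD_N$; but convexity of $SDD_N$ --- that two matrices admitting possibly different diagonal scalings in Definition~\ref{def:dd.sdd} have a sum admitting a common one --- is not obvious and is, in the usual treatments, itself deduced from the factor-width-two characterization you are in the middle of proving. As written that step is circular and would need an independent argument (e.g., via generalized diagonal dominance and H-matrix theory). This does not put you at a disadvantage relative to the paper, which simply cites the equivalence, but if you include the lemma's proof you should repair that direction.
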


The ``LP part'' of this theorem is not hard to see. The equality $p(x)=z^T(x,d)Qz(x,d)$ gives rise to linear equality constraints between the coefficients of $p$ and the entries of the matrix $Q$ (whose size is $\sim n^{\frac{d}{2}}\times n^{\frac{d}{2}}$ and hence polynomial in $n$ for fixed $d$). The requirement of diagonal dominance on the matrix $Q$ can also be described by linear inequality constraints on $Q$. The ``SOCP part'' of the statement comes from the fact, shown in~\cite{iSOS_journal}, that a matrix $A$ is sdd if and only if it can be expressed as 	\begin{align}\label{eq:SDSOS.def}A = \sum_{i< j} M_{2 \times 2}^{ij},\end{align}
where each $ M_{2 \times 2}^{ij}$ is an $n\times n$ symmetric matrix with zeros everywhere except for four entries $M_{ii}, M_{ij}, M_{ji}, M_{jj}$, which must make the $2\times 2$ matrix $\begin{bmatrix} M_{ii} & M_{ij}  \\ M_{ji} & M_{jj} \end{bmatrix}$ symmetric and positive semidefinite. These constraints are \emph{rotated quadratic cone} constraints and can be imposed using SOCP~\cite{socp_alizadeh_goldfarb, socp_boyd}:
$$M_{ii}\geq 0, ~\Bigl\lvert\Bigl\lvert\begin{pmatrix}
2M_{ij}\\M_{ii}-M_{jj}
\end{pmatrix}\Bigl\lvert\Bigl\lvert \leq M_{ii}+M_{jj}.$$

We refer to linear optimization problems over the convex cones $DSOS_{n,d}$, $SDSOS_{n,d}$, and $SOS_{n,d}$ as DSOS programs, SDSOS programs, and SOS programs respectively. In general, quality of approximation decreases, while scalability increases, as we go from SOS to SDSOS to DSOS programs. What we present next can be thought of as an iterative procedure for moving from DSOS/SDSOS relaxations towards SOS relaxations without increasing the problem size in each step.


\section{Pursuing improved bases}\label{sec:basis.pursuit}

Throughout this section, we consider the standard SDP
\begin{equation} \label{eq:genericSDP}
\begin{aligned}
SOS^* &\mathrel{\mathop{:}}=\min_{X\in S_n} C \cdot X\\
&\text{s.t. } A_i \cdot X=b_i, i=1,\ldots,m,\\
&\quad \quad \quad \ \ X \succeq 0,
\end{aligned}
\end{equation}
which we assume to have an optimal solution. We denote the optimal value by $SOS^*$ since we think of a semidefinite program as a sum of squares program over quadratic forms (recall that $PSD_{n,2}=SOS_{n,2}$). This is so we do not have to introduce additional notation to distinguish between degree-2 and higher degree SOS programs. The main goal of this section is to construct sequences of LPs and SOCPs that generate bounds on the optimal value of (\ref{eq:genericSDP}). Section  \ref{subsec:InnerApprox} focuses on providing upper bounds on (\ref{eq:genericSDP}) while Section \ref{subsec:OuterApprox} focuses on lower bounds.


\subsection{Inner approximations of the psd cone} \label{subsec:InnerApprox}

To obtain upper bounds on (\ref{eq:genericSDP}), we need to replace the constraint $X \succeq 0$ by a stronger condition. In other words, we need to provide \emph{inner approximations} to the set of psd matrices.

First, let us define a family of cones $$DD(U)\mathrel{\mathop{:}}=\{M \in S_n~|~ M=U^TQU \text{ for some dd matrix } Q \},$$parametrized by an $n \times n$ matrix $U$. Optimizing over the set $DD(U)$ is an LP since $U$ is fixed, and the defining constraints are linear in the coefficients of the two unknowns $M$ and $Q$. Furthermore, the matrices in $DD(U)$ are all psd; i.e., $\forall U,$ $DD(U) \subseteq P_n$.

The iteration number $k$ in the sequence of our LPs consists of replacing the condition $X \succeq 0$ by $X \in DD(U_k)$:
\begin{equation}\label{eq:LPChol}
\begin{aligned}
DSOS_k &\mathrel{\mathop{:}}=\min C \cdot X\\
&\text{s.t. } A_i \cdot X=b_i, ~i=1,\ldots,m,\\
&X \in DD(U_k).
\end{aligned}
\end{equation}
To define the sequence $\{U_k\}$, we assume that an optimal solution $X_k$ to (\ref{eq:LPChol}) exists for every iteration. As it will become clear shortly, this assumption will be implied simply by assuming that only the first LP in the sequence is feasible. The sequence $\{U_k\}$ is then given recursively by
\begin{equation}\label{eq:defUk}
\begin{aligned}
U_0&=I\\
U_{k+1}&=\text{chol}(X_k).
\end{aligned}
\end{equation}

Note that the first LP in the sequence optimizes over the set of diagonally dominant matrices as in the work of Ahmadi and Majumdar~\cite{iSOS_journal,dsos_ciss14}. By defining $U_{k+1}$ as a Cholesky factor of $X_k$, improvement of the optimal value is guaranteed in each iteration. Indeed, as $X_k=U_{k+1}^T I U_{k+1}$, and the identity matrix $I$ is diagonally dominant, we see that $X_{k} \in DD(U_{k+1})$ and hence is feasible for iteration $k+1$. This entails that the optimal value at iteration $k+1$ is at least as good as the optimal value at the previous iteration; i.e., $DSOS_{k+1}\leq DSOS_k$. Since the sequence $\{DSOS_k\}$ is lower bounded by $SOS^*$ and monotonic, it must converge to a limit $DSOS^*\geq SOS^*$. We have been unable to formally rule out the possibility that $DSOS^*>SOS^*$. In all of our numerical experiments, convergence to $SOS^*$ happens (i.e., $DSOS^*=SOS^*$), though the speed of convergence seems to be problem dependent (contrast e.g. the results of Section~\ref{sec:StableSet} with Section~\ref{sec:Partition}). What is easy to show, however, is that if $X_k$ is positive definite\footnote{This would be the case whenever our inner approximation is not touching the boundary of the psd cone in the direction of the objective. As far as numerical computation is concerned, this is of course always the case.}, then the improvement from step $k$ to $k+1$ is actually \emph{strict}.

\begin{theorem}\label{thm:strict.imp}
Let $X_k$ (resp. $X_{k+1}$) be an optimal solution of iterate $k$ (resp. $k+1$) of (\ref{eq:LPChol}) and assume that $X_k$ is pd and $SOS^*<DSOS_{k}$. Then, $$DSOS_{k+1}<DSOS_{k}.$$
\end{theorem}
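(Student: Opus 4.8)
The plan is to exhibit, under the stated hypotheses, an explicit feasible point for iterate $k+1$ of (\ref{eq:LPChol}) whose objective value is strictly smaller than $DSOS_k$. The key structural observation is that the affine constraints $A_i\cdot X=b_i$ are shared by every LP in the sequence and by the SDP (\ref{eq:genericSDP}). Write $L:=\{X\in S_n : A_i\cdot X=b_i,\ i=1,\dots,m\}$ and let $L_0:=\{X\in S_n : A_i\cdot X=0,\ i=1,\dots,m\}$ be its direction subspace. Both $X_k$ (feasible for (\ref{eq:LPChol})) and an optimal solution $X^\star$ of (\ref{eq:genericSDP}) lie in $L$, so $D:=X^\star-X_k\in L_0$; and by hypothesis $C\cdot D=SOS^\star-DSOS_k<0$. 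Thus $D$ is a feasible direction along $L$ that strictly improves the objective.

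Next I would show that $X_k$ lies in the topological interior of $DD(U_{k+1})$ inside $S_n$. Since $X_k$ is positive definite, its Cholesky factor $U_{k+1}=\text{chol}(X_k)$ is invertible and $X_k=U_{k+1}^T I\,U_{k+1}$. The identity matrix is strictly diagonally dominant ($a_{ii}=1>0=\sum_{j\neq i}|a_{ij}|$ for every $i$), hence $I\in\text{int}(DD_n)$. Because $U_{k+1}$ is invertible, the linear map $\Phi:Q\mapsto U_{k+1}^T Q\,U_{k+1}$ is a bijection of $S_n$ onto itself, hence a homeomorphism, and $\Phi(DD_n)=DD(U_{k+1})$ by definition; therefore $\Phi$ carries $\text{int}(DD_n)$ onto $\text{int}\big(DD(U_{k+1})\big)$, so $X_k=\Phi(I)\in\text{int}\big(DD(U_{k+1})\big)$. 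In particular there is some $\varepsilon>0$ with $X_k+\varepsilon D\in DD(U_{k+1})$.

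It then remains only to check feasibility and compute: $A_i\cdot(X_k+\varepsilon D)=b_i+\varepsilon\,(A_i\cdot D)=b_i$ since $D\in L_0$, and $X_k+\varepsilon D\in DD(U_{k+1})$ by construction, so $X_k+\varepsilon D$ is feasible for iterate $k+1$. Consequently
\[
DSOS_{k+1}\ \le\ C\cdot(X_k+\varepsilon D)\ =\ DSOS_k+\varepsilon\,(C\cdot D)\ <\ DSOS_k,
\]
which is the assertion. The step I expect to require the most care is the interiority claim $X_k\in\text{int}\big(DD(U_{k+1})\big)$: this is precisely where positive definiteness of $X_k$ enters, since it is what makes $U_{k+1}$ invertible and hence $\Phi$ a homeomorphism of $S_n$; the rest is routine bookkeeping with the common affine constraints. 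The same argument can alternatively be phrased by contradiction — if $DSOS_{k+1}=DSOS_k$ then $X_k$ is optimal for iterate $k+1$ while lying in the interior of its feasible cone, which forces $C\cdot X$ to be constant on $L$ and hence $SOS^\star=DSOS_k$, contradicting $SOS^\star<DSOS_k$.
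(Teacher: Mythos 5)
Your proposal is correct and is essentially the paper's own argument: the point $X_k+\varepsilon D=(1-\varepsilon)X_k+\varepsilon X^\star$ is exactly the convex combination the paper uses, and your interiority claim via the homeomorphism $Q\mapsto U_{k+1}^TQU_{k+1}$ is just a topological rephrasing of the paper's direct computation that $(1-\lambda)I+\lambda U_{k+1}^{-T}X^\star U_{k+1}^{-1}$ is diagonally dominant for small $\lambda$ because $I$ is strictly so. No gaps; the two proofs coincide in substance.
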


\begin{proof}
We show that for some $\lambda\in(0,1)$, the matrix $\hat{X}\mathrel{\mathop{:}}= (1-\lambda)X_k+\lambda X^*$ is feasible to the LP in iteration number $k+1$. We would then have that $$DSOS_{k+1}\leq C \cdot \hat{X}=(1-\lambda)C \cdot X_k+\lambda C \cdot X^*<{\gh DSOS_k,}$$
as we have assumed that $C \cdot X^*=SOS^*<DSOS_k=C \cdot X_k.$ To show feasibility of $\hat{X}$ to LP number $k+1$, note first that as both $X_k$ and $X^*$ satisfy the affine constraints $A_i \cdot X=b_i$, then $\hat{X}$ must also. Since $X_k=U_{k+1}^TU_{k+1}$ and $X_k$ is pd,  $U_{k+1}$ must have positive diagonal entries and is invertible. Let $$X_{k+1}^*\mathrel{\mathbb:}=U_{k+1}^{-T}X^*U_{k+1}^{-1}.$$ For $\lambda$ small enough the matrix $(1-\lambda)I+\lambda X_{k+1}^*$ will be dd since we know the identity matrix is strictly diagonally dominant. Hence, the matrix $$\hat{X}=U_{k+1}^T( (1-\lambda)I+\lambda X_{k+1}^*) U_{k+1}$$ is feasible to LP number $k+1$.
\end{proof}




A few remarks are in order. First, instead of the Cholesky decomposition, we could have worked with some other decompositions such as the LDL decomposition $X_k=LDL^T$ or the spectral decomposition $X_k=H^T\Lambda H$ (where $H$ has the eigenvectors of $X_k$ as columns). Aside from the efficiency of the Cholesky decomposition, the reason we made this choice is that the decomposition allows us to write $X_k$ as $U^TIU$ and the identity matrix $I$ is at the analytic center of the set of diagonally dominant matrices {\gh \cite[Section 8.5.3]{BoydBook}}. Second, the reader should see that feasibility of the first LP implies that all future LPs are feasible and lower bounded. While in most applications that we know of the first LP is automatically feasible (see, e.g., the stable set problem in Section~\ref{sec:StableSet}), sometimes the problem needs to be modified to make this the case. An example where this happens appears in Section~\ref{sec:Partition} (see Theorem~\ref{thm:feas.dsos}), where we apply an SOS relaxation to the partition problem.

Alternatively, one can first apply our iterative procedure to a Phase-I problem
\begin{equation} \label{eq:PhaseI}
\begin{aligned}
\alpha_k &\mathrel{\mathop:}= \min \alpha\\
&\text{s.t. } A_i \cdot X=b_i, i=1,\ldots,m\\
&X+\alpha I \in DD(U_k),
\end{aligned}
\end{equation}
with $U_k$ defined as in (\ref{eq:defUk}). Indeed, for $\alpha$ large enough, the initial problem in (\ref{eq:PhaseI}) (i.e., with $U_0=I$) is feasible. Thus all subsequent iterations are feasible and continually decrease $\alpha$. If for some iteration $k$ we get $\alpha_k \leq 0,$ then we can start the original LP sequence (\ref{eq:LPChol}) with the matrix $U_k$ obtained from the last iteration of the Phase-I algorithm.

In an analogous fashion, we can construct  a sequence of SOCPs that provide upper bounds on $SOS^*$. This time, we define a family of cones $${\gh SDD(U)\mathrel{\mathop{:}}=\{M \in S_n ~|~ M=U^TQU, \text{ for some sdd matrix } Q\},}$$ parameterized again by an $n\times n$ matrix $U$. For any $U$, optimizing over the set $SDD(U)$ is an SOCP and we have $SDD(U)\subseteq P_n$. This leads us to the following iterative SOCP sequence:
\begin{equation} \label{eq:SOCPchol}
\begin{aligned} 
SDSOS_k  &\mathrel{\mathop{:}}= \min C \cdot X\\
&\text{s.t. } A_i \cdot X=b_i, i=1,\ldots,m,\\
&X \in SDD(U_k).
\end{aligned}
\end{equation}
Assuming existence of an optimal solution $X_k$ at each iteration, we can once again define the sequence $\{U_k\}$ iteratively as
\begin{align*}
U_0&=I\\
U_{k+1}&=\text{chol}(X_k).
\end{align*}

{\gh The previous statements concerning strict improvement of the LP sequence as described in Theorem \ref{thm:strict.imp}, as well as its convergence} carry through for the SOCP sequence. In our experience, our SOCP bounds converge to the SDP optimal value often faster than our LP bounds do. While it is always true that $SDSOS_0\leq DSOS_0$ (as $DD\subseteq SDD$), the inequality can occasionally reverse in future iterations.  


\begin{figure}[h!]
	\begin{center}
		\mbox{
			\subfigure[LP inner approximations]
			{\label{subfig:DDCones1D}\scalebox{0.46}{\includegraphics{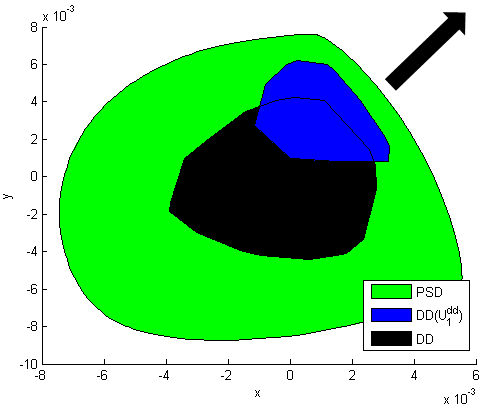}}}}
		\mbox{
			\subfigure[SOCP inner approximations]
			{\label{subfig:SDDCones1D}\scalebox{0.45}{\includegraphics{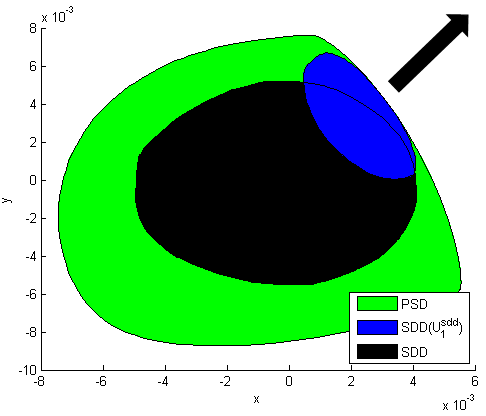}}}
		}
		
		\caption{Improvement after one Cholesky decomposition when maximizing the objective function $x+y$}
		\label{fig:DDSDDCones1D}
	\end{center}
\end{figure}

An illustration of both procedures is given in Figure \ref{fig:DDSDDCones1D}. We generated two random symmetric matrices $A$ and $B$ of size $10 \times 10$. The outermost set is the feasible set of an SDP with the constraint $I+xA+yB \succeq 0$. The goal is to maximize the function $x+y$ over this set. The set labeled $DD$ in Figure \ref{subfig:DDCones1D} (resp. $SDD$ in Figure \ref{subfig:SDDCones1D}) consists of the points $(x,y)$ for which $I+xA+yB$ is dd (resp. sdd). Let ($x_{dd}^*,y_{dd}^*$) (resp. ($x_{sdd}^*,y_{sdd}^*$)) be optimal solutions to the problem of maximizing $x+y$ over these sets. The set labeled $DD(U_1^{dd})$ in Figure \ref{subfig:DDCones1D} (resp. $SDD(U_1^{sdd})$ in Figure \ref{subfig:SDDCones1D}) consists of the points $(x,y)$ for which $I+xA+yB \in DD(U_1^dd)$ (resp. $\in SDD(U_1^{sdd}$)) where $U_1^{dd}$ (resp. $U_1^{sdd}$) corresponds to the Cholesky decomposition of $I+x_{dd}^*A+y_{dd}^*B$ (resp. $I+x_{sdd}^*A+y_{sdd}^*B$). Notice the interesting phenomenon that while the new sets happen to shrink in volume, they expand in the direction that we care about. Already in one iteration, the SOCP gives the perfect bound here.


%

\begin{figure}[h!]
	\begin{center}
		\mbox{
			\subfigure[LP inner approximations]
			{\label{subfig:DDCones}\scalebox{0.46}{\includegraphics{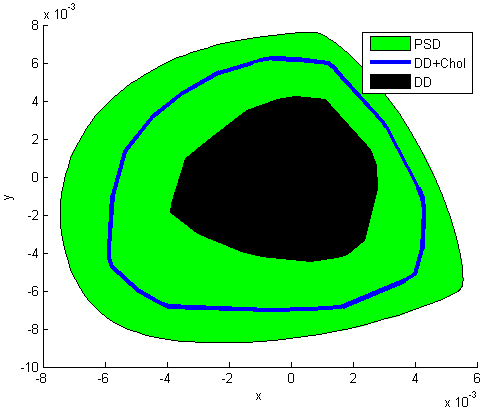}}}}
		\mbox{
			\subfigure[SOCP inner approximations]
			{\label{subfig:SDDCones}\scalebox{0.45}{\includegraphics{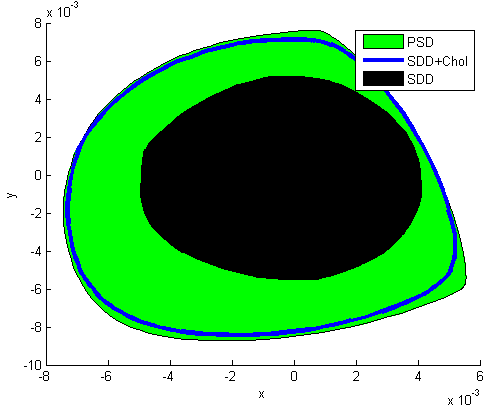}}}
		}
		
		\caption{Improvement in all directions after one Cholesky decomposition}
		\label{fig:DDSDDCones}
	\end{center}
\end{figure}

In Figure \ref{subfig:DDCones}, instead of showing the improvement in just the North-East direction, we show it in all directions. This is done by discretizing a large set of directions $d_i=(d_{i,x},d_{i,y})$ on the unit circle and optimizing along them. More concretely, for each $i$, we maximize $d_{i,x}x+d_{i,x}y$ over the set $I+xA+yB \in DD_n$. We extract an optimal solution every time and construct a matrix $U_{1,d_i}$ from its Cholesky decomposition. We then maximize in the same direction once again but this time over the set $I+xA+yB \in DD(U_{1,d_i})$. The set of all new optimal solutions is what is plotted with the thick blue line in the figure. We proceed in exactly the same way with our SOCPs to produce Figure~\ref{subfig:SDDCones}. Notice that both inner approximations after one iteration improve substantially. The SOCP in particular fills up almost the entire spectrahedron.


\subsection{Inner approximations to the cone of nonnegative polynomials}\label{subsec:polyopt}


A problem domain where inner approximations to semidefinite programs can be useful is in sum of squares programming. This is because the goal of SOS optimization is already to inner approximate the cone of nonnegative polynomials. So by further inner approximating the SOS cone, we will get bounds in the same direction as the SOS bounds.

Let $z(x)$ be the vector of monomials of degree up to $d$. Define a family of cones of degree-$2d$ polynomials $${\gh DSOS(U)\mathrel{\mathop:}=\{p ~|~ p(x)=z^T(x)U^TQUz(x), \text{ for some dd matrix } Q \},}$$ parameterized by an $n\times n$ matrix $U$. We can think of this set as the cone of polynomials that are dsos in the basis $Uz(x)$. If an SOS program has a constraint ``$p$ sos'', we will replace it iteratively by the constraint $p \in DSOS(U_k)$. The sequence of matrices $\{U_k\}$ is again defined recursively with
\begin{align*}
U_0&=I\\
U_{k+1}&=\mbox{chol}(U_k^TQ_kU_k),
\end{align*}
where $Q_k$ is an optimal Gram matrix of iteration $k$. 

Likewise, let $${\gh SDSOS(U)\mathrel{\mathop:}=\{p~|~ p(x)=z(x)^TU^TQUz(x), \text{ for some sdd matrix } Q \}.}$$  This set can also be viewed as the set of polynomials that are sdsos in the basis $Uz(x)$. To construct a sequence of SOCPs that generate improving bounds on the sos optimal value, we replace the constraint $p$ sos by $p \in SDSOS(U_k)$, where $U_k$ is defined as above.

\begin{figure}[h!]
	\begin{center}
		\mbox{
			\subfigure[LP inner approximations]
			{\label{subfig:DSOSCones}\scalebox{0.46}{\includegraphics{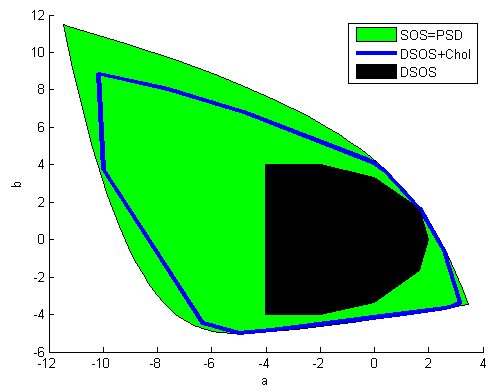}}}}
		\mbox{
			\subfigure[SOCP inner approximations]
			{\label{subfig:SDSOSCones}\scalebox{0.46}{\includegraphics{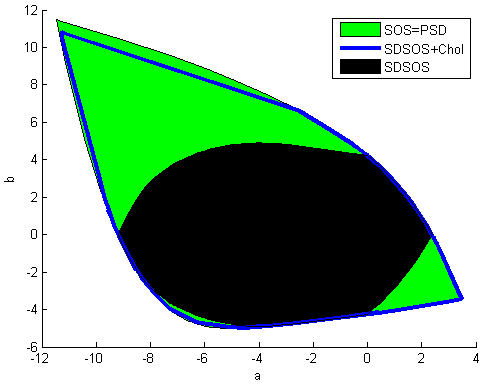}}}
		}
		
		\caption{Improvement in all directions after one Cholesky decomposition}
		\label{fig:DSOSSDSOSCones}
	\end{center}
\end{figure}

In Figure \ref{fig:DSOSSDSOSCones}, we consider a parametric family of polynomials
$$p_{a,b}(x_1,x_2)=2x_1^4+2x_2^4+ax_1^3x_2+(1-a)x_2^2x_2^2+bx_1x_2^3.$$ The outermost set in both figures corresponds to the set of {\gh pairs} $(a,b)$ for which $p_{a,b}$ is sos. As $p_{a,b}$ is a bivariate quartic, this set coincides with the set of $(a,b)$ for which $p_{a,b}$ is nonnegative. The innermost sets in the two subfigures correspond to $(a,b)$ for which $p_{a,b}$ is dsos (resp. sdsos). The {\gh thick blue} lines illustrate the optimal points achieved when maximizing in all directions over the sets obtained from a single Cholesky decomposition. (The details of the procedure are exactly the same as Figure~\ref{fig:DDSDDCones}.) Once again, the inner approximations after one iteration improve substantially over the DSOS and SDSOS approximations.

 
\subsection{Extreme-ray interpretation of the change of basis} \label{subsec:Corners}
In this section, we present an alternative but equivalent way of expressing the LP and SOCP-based sequences. This characterization is based on the extreme-ray description of the cone of diagonally dominant/scaled diagonally dominant matrices. It will be particularly useful when we consider outer approximations of the psd cone in Section \ref{subsec:OuterApprox}.

\begin{lemma}[Barker and Carlson \cite{dd_extreme_rays}]\label{lem:dd.corners}
A symmetric matrix $M$ is diagonally dominant if and only if it can be written as $$M=\sum_{i=1}^{n^2} \alpha_i v_iv_i^T, \alpha_i\geq 0,$$ where $\{v_i\}$ is the set of all {\gh nonzero} vectors in $\mathbb{R}^n$ with at most $2$ nonzero components, each equal to $\pm 1$.
\end{lemma}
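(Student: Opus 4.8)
The plan is to prove both directions of the equivalence, the "if" direction being essentially immediate and the "only if" direction being the substantive one.

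First I would handle the easy direction: suppose $M=\sum_i \alpha_i v_iv_i^T$ with $\alpha_i\ge 0$ and each $v_i$ having at most two nonzero entries, each $\pm 1$. Each rank-one term $v_iv_i^T$ is manifestly diagonally dominant — if $v_i=\pm e_k$ it contributes only to the $(k,k)$ diagonal entry, and if $v_i=\pm e_k\pm e_l$ it contributes a $2\times 2$ block $\begin{bmatrix} 1 & \pm 1 \\ \pm 1 & 1\end{bmatrix}$ (embedded in the $n\times n$ matrix) which satisfies $a_{kk}=1=|a_{kl}|$ and $a_{ll}=1=|a_{lk}|$. Since the set $DD_n$ is a convex cone (the defining inequalities $a_{ii}\ge\sum_{j\ne i}|a_{ij}|$ are preserved under addition and nonnegative scaling — one checks this using the triangle inequality $|a_{ij}+b_{ij}|\le|a_{ij}|+|b_{ij}|$), any nonnegative combination of such terms is again dd.

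The main work is the "only if" direction: given a dd matrix $M$, exhibit the decomposition. I would construct it explicitly. For each pair $i<j$, peel off a term proportional to $(e_i + \mathrm{sgn}(M_{ij})e_j)(e_i+\mathrm{sgn}(M_{ij})e_j)^T$ with coefficient $|M_{ij}|$; summing these over all pairs produces a matrix whose off-diagonal $(i,j)$ entry is exactly $M_{ij}$ (sign included) and whose $i$-th diagonal entry is $\sum_{j\ne i}|M_{ij}|$. Subtracting this from $M$ leaves a diagonal matrix with entries $M_{ii}-\sum_{j\ne i}|M_{ij}|$, which are nonnegative precisely by the diagonal dominance hypothesis; each such leftover diagonal entry is then absorbed as a nonnegative multiple of $e_ie_i^T$. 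Collecting everything, $M$ is written as a nonnegative combination of the rank-one matrices $v_iv_i^T$ ranging over vectors with at most two nonzero $\pm 1$ entries, and since there are $n$ vectors of the form $\pm e_i$ (only $n$ distinct $v_iv_i^T$) and $\binom{n}{2}$ relevant sign choices among the $\pm e_i\pm e_j$, the total number of generators needed is on the order of $n^2$, consistent with the index range stated in the lemma.

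The one point requiring a little care — and the only place I expect any friction — is the bookkeeping of which sign patterns $\pm e_i \pm e_j$ actually appear and confirming that the claimed index set of "all nonzero vectors with at most two nonzero components each $\pm 1$" genuinely suffices (and that it is legitimate to allow, but not require, all of them, since coefficients may be zero). One should note that $v_iv_i^T = (-v_i)(-v_i)^T$, so $e_i+e_j$ and $-e_i-e_j$ give the same rank-one matrix, and likewise $e_i-e_j$ and $-e_i+e_j$; thus for each pair $\{i,j\}$ there are effectively two distinct rank-one generators, and the construction above selects whichever one matches $\mathrm{sgn}(M_{ij})$. This is routine, so I would state the explicit decomposition, verify the diagonal and off-diagonal entries match, invoke diagonal dominance for nonnegativity of the residual diagonal, and conclude.
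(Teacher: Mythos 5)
Your proof is correct. Note that the paper itself does not prove this lemma --- it is quoted from Barker and Carlson with a citation and no argument --- so there is no in-paper proof to compare against; your explicit decomposition
$$M=\sum_{i<j}|M_{ij}|\bigl(e_i+\mathrm{sgn}(M_{ij})e_j\bigr)\bigl(e_i+\mathrm{sgn}(M_{ij})e_j\bigr)^T+\sum_i\Bigl(M_{ii}-\sum_{j\neq i}|M_{ij}|\Bigr)e_ie_i^T$$
is the standard argument, the entry-by-entry verification and the use of diagonal dominance for nonnegativity of the residual diagonal are both right, and your count of $n^2$ distinct generators (identifying $v$ with $-v$) matches the index range in the statement.
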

The vectors $v_i$ are the extreme rays of the $DD_n$ cone. This characterization of the set of diagonally dominant matrices leads to a convenient description of the dual cone:
\begin{equation}\label{eq:DD*}
DD^*_n=\{X \in {\gh S_n}~|~ v_i^TXv_i \geq 0, i=1,\ldots,n^2\},
\end{equation}
which we will find to be useful in the next subsection. Using Lemma~\ref{lem:dd.corners}, we can rewrite {\gh the sequence of LPs given in (\ref{eq:LPChol}) as}
\begin{equation}\label{eq:LPCorners}
\begin{aligned}
DSOS_k &\mathrel{\mathop{:}}=\min_{X,\alpha_i} C \cdot X\\
&\text{s.t. } A_i \cdot X=b_i,\  i=1,\ldots,m,\\
&X=\sum_{i=1}^{n^2} \alpha_i (U_k^Tv_i)(U_k^Tv_i)^T,\\
&{\gh \alpha_i \geq 0, i=1,\ldots,n^2.}
\end{aligned}
\end{equation}
Let $X_k$ be an optimal solution to the LP in iteration $k$. The sequence of matrices $\{U_k\}$ is defined just as before:
\begin{align*}
U_0&=I\\
U_{k+1}&=\text{chol}(X_k).
\end{align*}

In the first iteration, a linear map is sending (or intuitively ``rotating'') the extreme rays $\{v_iv_i^T\}$ of the dd cone to a new set of extreme rays $\{(U_1^Tv_i)(U_1^Tv_i)^T\}$. This procedure keeps repeating itself without ever changing the number of extreme rays. 



As the sequence of LPs defined in (\ref{eq:LPCorners}) is equivalent to the sequence defined in (\ref{eq:LPChol}), the optimal value of (\ref{eq:LPCorners}) improves in each iteration. This can be seen directly: Indeed, $X_{k}$ is feasible for iteration $k+1$ of (\ref{eq:LPCorners}) by taking $\alpha_i=1$ when {\gh $v_i$ has} exactly one nonzero entry equal to $1$ and $\alpha_i=0$ otherwise. This automatically implies that $DSOS_{k+1}\leq DSOS_k.$ Moreover, the improvement is strict under the assumptions of Theorem~\ref{thm:strict.imp}.

The set of scaled diagonally dominant matrices can be described in a similar fashion. In fact, from (\ref{eq:SDSOS.def}), we know that any scaled diagonally dominant matrix $M$ can be written as $${\gh M=\sum_{i=1}^{ {n \choose 2} }V_i \Lambda_i V_i^T,}$$
where $V_i$ is an $ n \times 2$ matrix whose columns each contain exactly one nonzero element which is equal to $1,$ and $\Lambda_i$ is a $2 \times 2$ symmetric psd matrix.

This characterization of $SDD_n$ gives an immediate description of the dual cone $$SDD^*_n={\gh \left\{X \in S^n ~|~ V_i^TXV_i \succeq 0, i=1,\ldots, {n \choose 2}\right\}},$$ which will become useful later. Our SOCP sequence in explicit form is then
\begin{equation}\label{eq:socp.corner.repres}
\begin{aligned}
SDSOS_k  &= \min_{X,\Lambda_i} C \cdot X\\
&\text{s.t. } A_i \cdot X=b_i, i=1,\ldots,m, \\
&X =\sum_{i=1}^{n \choose 2} (U_k^TV_i) \Lambda_i (U_k^TV_i)^T,\\
&\Lambda_i \succeq 0.
\end{aligned}
\end{equation}

If $X_k$ is an optimal solution at step $k$, the matrix sequence $\{U_k\}$ is defined as before:
\begin{align*}
U_0&=I\\
U_{k+1}&=\text{chol}(X_k).
\end{align*}

The interpretation of (\ref{eq:socp.corner.repres}) is similar to that of (\ref{eq:LPCorners}).

\subsection{Outer approximations of the psd cone}\label{subsec:OuterApprox}

In Section \ref{subsec:InnerApprox}, we considered inner approximations of the psd cone to obtain upper bounds on (\ref{eq:genericSDP}). In many applications, semidefinite programming is used as a ``relaxation'' to provide outer approximations to some nonconvex sets. This approach is commonly used for relaxing quadratic programs; see, e.g., Section~\ref{sec:StableSet}, where we consider the problem of finding the largest stable set of a graph. In such scenarios, it does not make sense for us to inner approximate the psd cone: to have a valid relaxation, we need to outer approximate it. {\gh This can be easily achieved by working with the dual problems, which we will derive explicitly in this section.}


Since $P_n \subseteq DD^*_n$, the first iteration in our LP sequence for outer approximation will be
\begin{align*}
DSOSout_0 &\mathrel{\mathop{:}}= \min_{X} C \cdot X\\
&\text{s.t. } A_i \cdot X =b_i, i=1,\ldots,m,\\
&X \in DD_n^*.
\end{align*}
By the description of the dual cone in (\ref{eq:DD*}), we know this can be equivalently written as
\begin{equation} \label{eq:LP0Outer}
 \begin{aligned}
DSOSout_0&= \min_{X} C \cdot X\\
&\text{s.t. } A_i \cdot X =b_i, \forall i\\
&v_i^TXv_i \geq 0, i=1,\ldots,n^2,
\end{aligned}
\end{equation}
where the $v_i$'s are the extreme rays of the set of diagonally dominant matrices as described in Section \ref{subsec:Corners}; namely, all vectors with at most two nonzero elements which are either $+1$ or $-1$. Recall that when we were after inner approximations (Subsection~\ref{subsec:InnerApprox}), the next LP in our sequence was generated by replacing the vectors $v_i$ by $U^Tv_i$, where the choice of $U$ was dictated by a Cholesky decomposition of an optimal solution of the previous iterate. 
In the outer approximation setting, we seemingly do not have access to a psd matrix that would provide us with a Cholesky decomposition.  However, we can simply get this from the dual of (\ref{eq:LP0Outer})
\begin{align*}
DSOSout^d_0 &\mathrel{\mathop{:}}= \max_{y,\alpha} b^Ty\\
&\text{s.t. } C- \sum_{i=1}^m y_iA_i=\sum_{i=1}^{n^2} \alpha_iv_iv_i^T,\\
&\alpha_i\geq 0, i=1,\ldots,n^2,
\end{align*}
by taking $U_1=\text{chol}(C-\sum_i y_i^*A_i)$. We then replace $v_i$ by $U_1^Tv_i$ in (\ref{eq:LP0Outer}) to get the next iterate and proceed. In general, the sequence of LPs can be written as
\begin{align*}
DSOSout_k&= \min_{X} C \cdot X\\
&\text{s.t. } A_i \cdot X =b_i, i=1,\ldots,m,\\
&v_i^TU_kXU_k^Tv_i \geq 0,
\end{align*}
where $\{U_k\}$ is a sequence of matrices defined recursively as
\begin{align*}
U_0 &=I\\
U_k &=\text{chol}\left(C-\sum_i {\gh y_i^{(k-1)}} A_i \right).
\end{align*}
The vector $y^{k-1}$ here is an optimal solution to the dual problem at step $k-1$:
\begin{align*}
DSOSout^d_{k-1} &\mathrel{\mathop{:}}= \max_{y,\alpha} b^Ty\\
&\text{s.t. } C- \sum_{i=1}^m y_iA_i=\sum_{i=1}^{n^2} \alpha_i(U_{k-1}^Tv_i)(U_{k-1}^Tv_i)^T,\\
&\alpha_i\geq 0, i=1,\ldots,n^2.
\end{align*}
This algorithm again strictly improves the objective value at each iteration. Indeed, from LP strong duality, we have $$DSOSout_k=DSOSout_k^d$$ and Theorem \ref{thm:strict.imp} applied to the dual problem states that $$DSOSout_{k-1}^d<DSOSout_{k}^d.$$

The sequence of SOCPs for outer approximation can be constructed in an analogous manner:
\begin{align*}
SDSOSout_k&= \min_{X} C \cdot X\\
&\text{s.t. } A_i \cdot X =b_i, i=1,\ldots,m,\\
&V_i^TU_kXU_k^TV_i \succeq 0, i=1,\ldots,{n \choose 2},
\end{align*}
where $V_i$'s are $n \times 2$ matrices containing exactly one 1 in each column, and $\{U_k\}$ is a sequence of matrices defined as
\begin{align*}
U_0 &=I\\
U_k &=\text{chol}\left(C-\sum_i \gh{y_i^{(k-1)}} A_i \right)
\end{align*}
Here again, the vector {\gh$y^{(k-1)}$} is an optimal solution to the dual SOCP at step $k-1$:
\begin{align*}
SDSOSout^d_{k-1} &\mathrel{\mathop{:}}= \max_{y,\Lambda_i} b^Ty\\
&\text{s.t. } C- \sum_{i=1}^m y_iA_i=\sum_{i=1}^{n \choose 2} (U_{k-1}^Tv_i)\Lambda_i(U_{k-1}^Tv_i)^T,\\
&\Lambda_i\succeq 0, i=1,\ldots,{n \choose 2},
\end{align*}
where each $\Lambda_i$ is a $2 \times 2$ unknown symmetric matrix.


\begin{remark}
Let us end with some concluding remarks about our algorithm. There are other ways of improving the DSOS and SDSOS bounds. For example, Ahmadi and Majumdar~\cite{iSOS_journal, dsos_cdc14} propose the requirement that $(\sum_{i=1}^n x_i^2)^rp(x)$ be dsos or sdsos as a sufficient condition for nonnegativity of $p$. As $r$ increases, the quality of approximation improves, although the problem size also increases very quickly. Such hierarchies are actually commonly used in the sum of squares optimization literature. But unlike our approach, they do not take into account a particular objective function and may improve the inner approximation to the PSD cone in directions that we do not care about.
%
%
Nevertheless, these hierarchies have interesting theoretical implications. Under some assumptions, one can prove that as $r\rightarrow\infty$, the underlying convex programs succeed in optimizing over the entire set of nonnegative polynomials; see, e.g.,~\cite{Reznick_Unif_denominator, jesus_polya, PhD:Parrilo, iSOS_journal}.

Another approach to improve on the DSOS and SDSOS bounds appears in the recent work in~\cite{isos_cg} {\gh with Dash. We show there how ideas from column generation in large-scale integer and linear programming can be used to iteratively improve inner approximations to semidefinite cones. The LPs and SOCPs proposed in that work take the objective function into account and increase the problem size after each iteration by a moderate amount.} By contrast, the LPs and SOCPs coming from our Cholesky decompositions in this paper have exactly the same size in each iteration. We should remark however that the LPs from iteration two and onwards are typically more dense than the initial LP (for DSOS) and slower to solve. A worthwhile future research direction {\gh would be to systematically compare the performance of the two approaches and to explore customized solvers for the LPs and the SOCPs that arise in our algorithms.}

\end{remark}

\section{The maximum stable set problem} \label{sec:StableSet}
A classic problem in discrete optimization is that of finding the stability number of a graph. The graphs under our consideration in this section are all undirected and unweighted. A \emph{stable set} (or \emph{independent set}) of a graph $G=(V,E)$ is a set of nodes of $G$ no two of which are adjacent. The stability number of {\gh $G$}, often denoted by $\alpha(G)$, is the size of its maximum stable set(s). The problem of determining $\alpha$ has many applications in {\gh scheduling (see, e.g., \cite{golumbic2005algorithmic}) and coding theory \cite{Lovasz}}. As an example, the maximum number of final exams that can be scheduled on the same day at a university without requiring any student to take two exams is given by the stability number of a graph. This graph has courses IDs as nodes and an edge between two nodes if and only if there is at least one student registered in both courses. Unfortunately, the problem of testing whether $\alpha(G)$ is greater than a given integer $k$ is well known to be NP-complete~\cite{Karp}. Furthermore, the stability number cannot be approximated within a factor $|V|^{1-\epsilon}$ for any $\epsilon >0$ unless P$=$NP \cite{HaastadJohan}.

A straightforward integer programming formulation of $\alpha(G)$ is given by
\begin{align*}
\alpha(G) &=\max \sum_i x_i\\
&\text{s.t. } x_i+x_j \leq 1, \text{ if } {\gh \{i,j\}} \in E\\
&x_i \in \{0,1\}.
\end{align*} 
The standard LP relaxation for this problem is obtained by changing the binary constraint $x_i \in \{0,1\}$ to the linear constraint $x_i \in [0,1]$:
\begin{equation}\label{eq:standard.LP}
\begin{aligned}
LP &\mathrel{\mathop{:}}=\max \sum_i x_i\\
&\text{s.t. } x_i+x_j \leq 1, \text{ if } {\gh \{i,j\}} \in E\\
&x_i \in [0,1].
\end{aligned} 
\end{equation}
Solving this LP results in an upper bound on the stability number. The quality of this upper bound can be improved by adding the so-called \emph{clique inequalities}. The set of $k$-clique inequalities, denoted by $C_k$, is the set of constraints of the type $x_{i_1}+x_{i_2}+\ldots+x_{i_k}\leq 1$, if $(i_1,\ldots,i_k)$ form a clique (i.e., a complete subgraph) of $G$. Observe that these inequalities must be satisfied for binary solutions to the above LP, but possibly not for fractional ones. Let us define a family of LPs indexed by $k$:
\begin{equation}\label{eq:standard.LP.with.clique.inequ}
\begin{aligned}
LP^k &\mathrel{\mathop{:}}=\max \sum_i x_i\\
&x_i \in [0,1]\\
&C_1,\ldots,C_k \text{ are satisfied.}
\end{aligned} 
\end{equation}
Note that $LP=LP^2$ by construction and $\alpha(G)\leq LP^{k+1}\leq LP^k$ for all $k$. We will be comparing the bound obtained by some of these well-known LPs with those achieved via the new LPs that we propose further below.

A famous semidefinite programming based upper bound on the stability number is due to Lov\'{a}sz~\cite{Lovasz}:
\begin{align*}
\vartheta(G) &\mathrel{\mathop{:}}= \max_X J \cdot X\\
&\text{s.t. } I \cdot X=1\\
&X_{ij}=0, ~\forall {\gh \{i,j\}} \in E\\
&X \succeq 0,
\end{align*}
where $J$ here is the all ones matrix and $I$ is the {\gh identity matrix.} The optimal value $\vartheta(G)$ is called the Lov\'{a}sz theta number of the graph. We have the following inequalities $$\alpha(G)\leq \vartheta(G) \leq LP^k, \ \forall k.$$

The fact that $\alpha(G)\leq \vartheta(G)$ is easily seen by noting that if $S$ is a stable set of maximum size and $1_S$ is its indicator vector, then the rank-one matrix $\frac{1}{|S|} 1_S 1_S^T$ is feasible to the SDP and gives the objective value $|S|$. The other inequality states that this SDP-based bound is stronger than the aforementioned LP bound even with all the clique inequalities added (there are exponentially many). A proof can be found e.g. in \cite[Section 6.5.2]{LaurentVall}.

Our goal here is to obtain LP and SOCP based sequences of upper bounds on the Lov\'{a}sz theta number. To do this, we construct a series of outer approximations of the set of psd matrices as described in Section \ref{subsec:OuterApprox}. The first bound in the sequence of LPs is given by:
\begin{align*}
DSOS_0(G) &\mathrel{\mathop{:}}= \max_X J \cdot X\\
&\text{s.t. } I \cdot X=1\\
&X_{ij}=0, ~\forall {\gh \{i,j\}} \in E\\
&X \in DD_n^*.
\end{align*}



In view of (\ref{eq:DD*}), this LP can be equivalently written as

\begin{equation} \label{eq:DSOSLovasz}
\begin{aligned}
DSOS_0(G) &= \max_X J \cdot X\\
&\text{s.t. } I \cdot X=1\\
&X_{ij}=0, ~\forall {\gh \{i,j\}} \in E\\
&v_i^TXv_i \geq 0, i=1,\ldots,n^2,
\end{aligned}
\end{equation}
where $v_i$ is a vector with at most two nonzero entries, each nonzero entry being either $+1$ or $-1$. {\gh This LP is always feasible (e.g., with $X=\frac{1}{n}I$). Furthermore, it is bounded above.} Indeed, the last constraints in (\ref{eq:DSOSLovasz}) imply in particular that for all $i,j$, we must have $$X_{i,j}\leq \frac{1}{2}(X_{ii}+X_{jj}).$$
This, together with the constraint $I\cdot X=1$, implies that the objective $J\cdot X$ must remain bounded. As a result, the first LP in our iterative sequence will give a finite upper bound on $\alpha$. 

To progress to the next iteration, we will proceed as described in Section \ref{subsec:OuterApprox}. The new basis for solving the problem is obtained through the dual\footnote{The reader should not be confused to see both the primal and the dual as maximization problems. We can make the dual a minimization problem by changing the sign of $y$.} of (\ref{eq:DSOSLovasz}):
\begin{equation}\label{eq:DSOSdualLovasz}
\begin{aligned}
DSOS_0^d(G) &\mathrel{\mathop{:}}=\max y\\
&\text{s.t. } yI+Y-J=\sum_{i=1}^{n^2} \alpha_i v_i v_i^T\\
& Y_{ij}=0 \text{ if } i=j \text{ or } {\gh \{i,j\}} \notin E\\
&\alpha_i \geq 0, i=1,\ldots,n^2.
\end{aligned}
\end{equation}
The second constraint in this problem is equivalent to requiring that $yI+Y-J$ be dd. We  can define $$U_1=\text{chol}(y_0^*I+Y_0^*-J)$$ where $(y_1^*,Y_1^*)$ are optimal solutions to (\ref{eq:DSOSdualLovasz}). We then solve 
\begin{align*}
DSOS_1(G) &\mathrel{\mathop{:}}= \max_X J \cdot X\\
&\text{s.t. } I \cdot X=1\\
&X_{ij}=0, ~\forall {\gh \{i,j\}} \in E\\
&v_i^TU_1XU_1^Tv_i \geq 0, i=1,\ldots,n^2,
\end{align*}
to obtain our next iterate. The idea remains exactly the same for a general iterate $k$: We construct the dual 
\begin{align*}
DSOS_{k}^d(G) &\mathrel{\mathop{:}}=\max y\\
&\text{s.t. } yI+Y-J=\sum_{i=1}^{n^2} \alpha_i U_{k}^T v_i (U_{k}^T v_i)^T\\
& Y_{ij}=0 \text{ if } i=j \text{ or } {\gh \{i,j\}} \notin E\\
&\alpha_i \geq 0, \forall i,
\end{align*}
and define 
\begin{align*}
U_{k+1}\mathrel{\mathop{:}}=\text{chol}(y_k^*+Y_k^*-J), 
\end{align*}
where $(y_k^*, Y_k^*)$ is an optimal solution to the dual. The updated primal is then
\begin{equation}\label{eq:DSOSLovaszk}
\begin{aligned}
DSOS_{k+1}(G) &\mathrel{\mathop{:}}= \max_X J \cdot X\\
&\text{s.t. } I \cdot X=1\\
&X_{ij}=0, ~\forall {\gh \{i,j\}} \in E\\
&v_i^TU_{k+1}XU_{k+1}^Tv_i \geq 0, i=1,\ldots,n^2.
\end{aligned}
\end{equation} 
As stated in Section \ref{subsec:OuterApprox}, the optimal values of (\ref{eq:DSOSLovaszk}) are guaranteed to strictly improve as a function of $k$. Note that to get the bounds, we can just work with the dual problems throughout. 

An analoguous technique can be used to obtain a sequence of SOCPs. For the initial iterate, instead of requiring that $X \in DD^*$ in (\ref{eq:DSOSLovasz}), we require that $X \in SDD^*$. This problem must also be bounded and feasible as $$P_n\subseteq SDD^*\subseteq DD^*.$$ Then, for a given iterate $k$, the algorithm consists of solving
\begin{align*}
SDSOS_k(G) &\mathrel{\mathop{:}}=\max_X J \cdot X\\
&\text{s.t. } I \cdot X=1\\
&X_{ij}=0, \forall {\gh \{i,j\}} \in E\\
&V_i^TU_kXU_k^TV_i \succeq 0, i=1,\ldots,{n \choose 2},
\end{align*}
where as explained in Section~\ref{subsec:Corners} each $V_i$ is an $ n \times 2$ matrix whose columns contain exactly one nonzero element which is equal to $1$. The matrix $U_k$ here is fixed and obtained by first constructing the dual SOCP
\begin{align*}
SDSOS_{k}^d(G) &\mathrel{\mathop{:}}=\max y\\
&\text{s.t. } yI+Y-J=\sum_{i=1}^{n\choose 2} U_{k}^T V_i \Lambda_i (U_{k}^T V_i)^T\\
& Y_{ij}=0 \text{ if } i=j \text{ or } {\gh \{i,j\}} \notin E\\
&\Lambda_i \succeq 0, \forall i,
\end{align*}
(each $\Lambda_i$ is a symmetric $2\times 2$ matrix decision variable) and then taking $$U_{k}=\text{chol}(y_k^*I+Y_k^*-J).$$

Once again, one can just work with the dual problems to obtain the bounds.





As our first example, we apply both techniques to the problem of finding the stability number of the complement of the Petersen graph (see Figure \ref{subfig:PetersenGraph}). The exact stability number here is 2 and an example of a maximum stable set is illustrated by the two white nodes in Figure \ref{subfig:PetersenGraph}. The Lov\'asz theta number is 2.5 and has been represented by the continuous line in Figure \ref{subfig:IterDSOSSDSOS}. The dashed lines represent the optimal values of the LP and SOCP-based sequences of approximations for 7 iterations. Notice that already within one iteration, the optimal values are within one unit of the true stability number, which is good enough for knowing the exact bound (the stability number is an integer). From the fifth iteration onwards, they differ from the Lov\'asz theta number by only $10^{-2}$.

\begin{figure}[h!]
	\begin{center}
		\mbox{
			\subfigure[Complement of Petersen graph]
			{\label{subfig:PetersenGraph}\scalebox{0.18}{\includegraphics{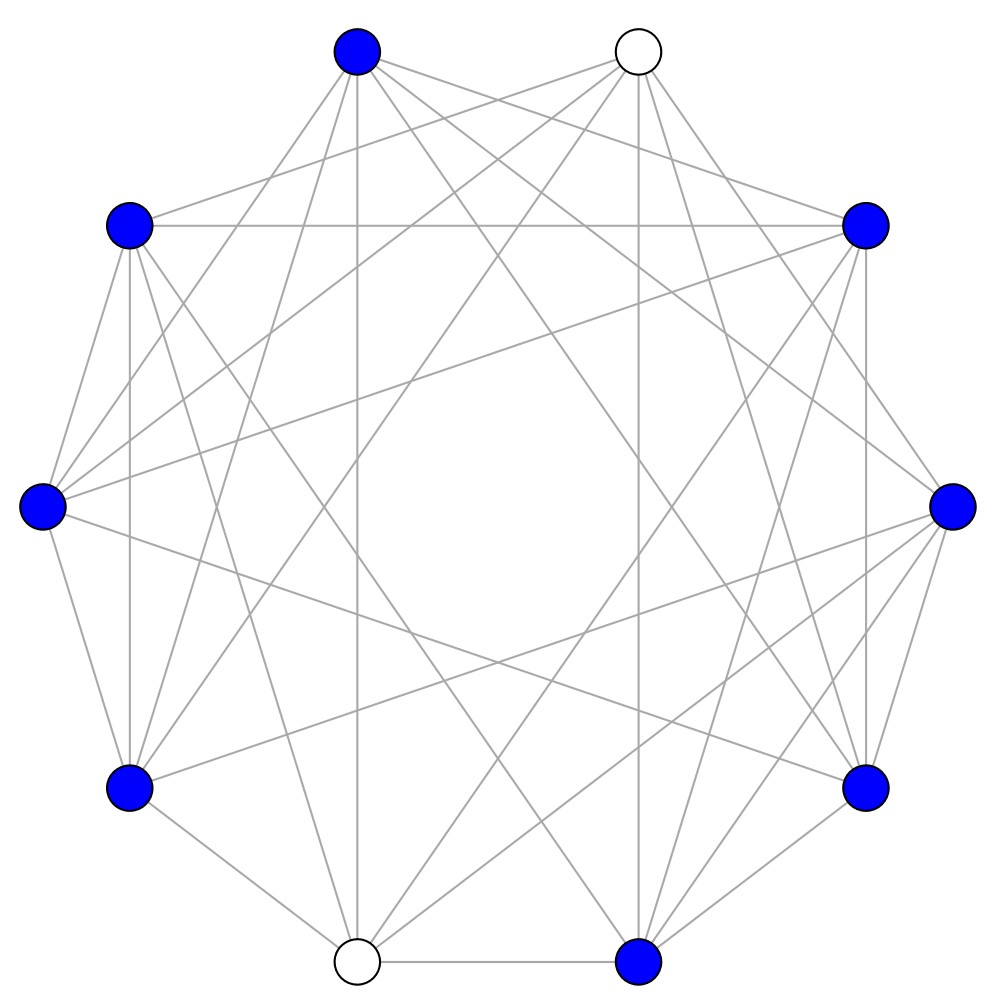}}}}
		\mbox{
			\subfigure[The Lov\'{a}sz theta number and iterative bounds bounds obtained by LP and SOCP]
			{\label{subfig:IterDSOSSDSOS}\scalebox{0.22}{\includegraphics{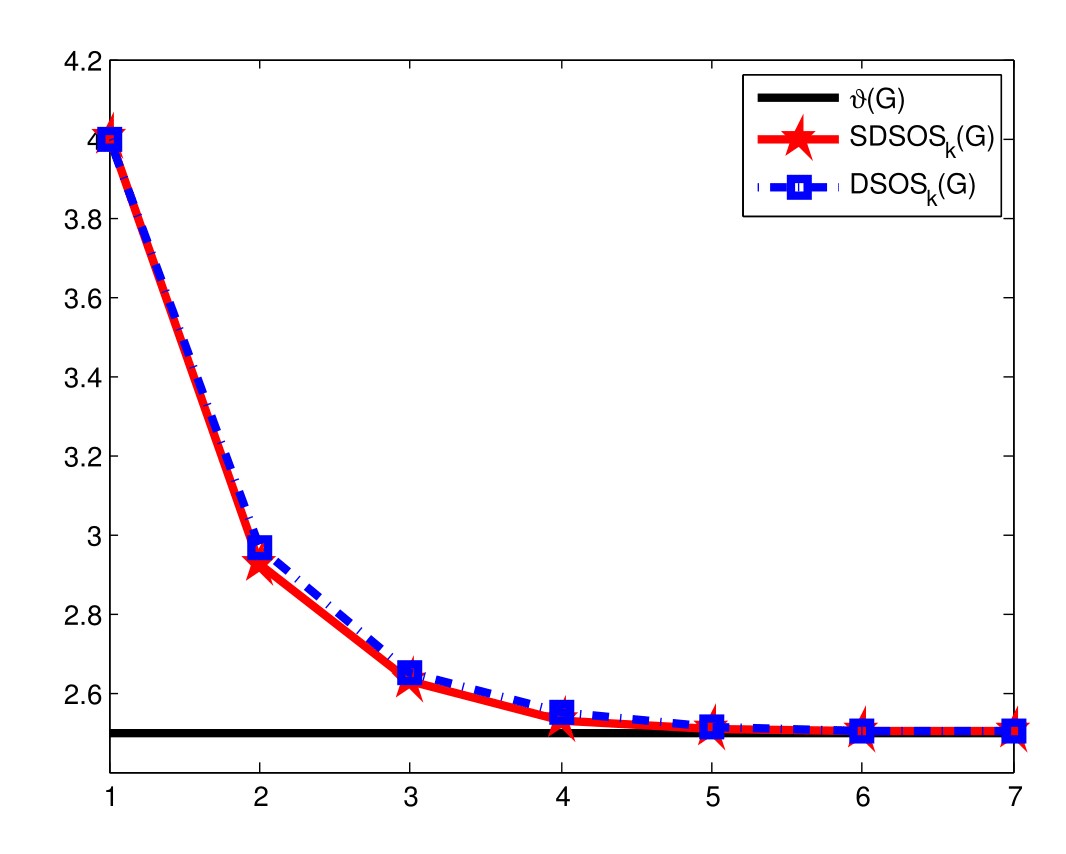}}}
		}
		
		\caption{Upper bounding the stability number of the complement of the Petersen graph}
		\label{fig:dsos.sdsos.Lovasz.Petersen}
	\end{center}
\end{figure}

\vspace{5mm}

Finally, in Table \ref{table:stable_set_success}, we have generated 100 instances of 20-node Erd\"{o}s-R\'{e}nyi graphs with edge probability $0.5$. For each instance, we compute the bounds from the Lov\'{a}sz SDP, the standard LP in (\ref{eq:standard.LP}), the standard LP with all 3-clique inequalities added ($LP^3$ in (\ref{eq:standard.LP.with.clique.inequ})), and our LP/SOCP iterative sequences. We focus here on iterations 3,4 and 5 because there is no need to go further. We compare our bounds with the standard LP and the standard LP with 3-clique inequalities because they are LPs of roughly the same size. If any of these bounds are within one unit of the true stable set number, we count this as a success and increment the counter. As can be seen in Table \ref{table:stable_set_success}, the Lov\'{a}sz theta number is always within a unit of the stable set number, and so are our LP and SOCP sequences ($DSOS_k,SDSOS_k$) after four or at most five iterations. If we {\gh look just} at the bound after 3 iterations, the success rate of SDSOS is noticeably higher than the success rate of DSOS. Also note that the standard LP with or without the three clique inequalities never succeeds in giving a bound within one unit of $\alpha(G)$.\footnote{All numerical experiments in this paper have been parsed using either SPOT~\cite{SPOT_Megretski} or YAMIP~\cite{yalmip} and solved using the LP/SOCP/SDP solver of MOSEK~\cite{mosek}.}


\begin{table}[h!]
\small
\begin{tabular}{|c|c|c|c|c|c|c|c|c|}
  \hline
  $\vartheta(G)$  &$LP$ & $LP^3$ & $DSOS_3$ &$DSOS_4$ & $DSOS_5$ & $SDSOS_3$ & $SDSOS_4$ & $SDSOS_5$\\
  \hline
100\% & 0\% & 0\% & 14\% & 83\% & 100\% & 69\% & 100\% & 100\%\\
  \hline
\end{tabular}
\vspace{2mm}
\caption{Percentage of instances out of 100 where the bound obtained is less than a unit away from the {\gh stability number}}
\label{table:stable_set_success}
\end{table}
\vspace{0.5mm}

\section{Partition}\label{sec:Partition}

The partition problem is arguably the simplest NP-complete problem to state: Given a list of positive integers $a_1,\ldots,a_n$, is it possible to split them into two sets with equal sums? We say that a partition instance is feasible if the answer is yes (e.g., \{5,2,1,6,3,8,5,4,1,1,10\}) and infeasible if the answer is no (e.g., \{47,20,13,15,36,7,46\}). The partition problem is NP-complete but only weakly. In fact, the problem admits a pseudopolynomial time algorithm based on dynamic programming that can deal with rather large problem sizes efficiently. This algorithm has polynomial running time on instances where the bit size of the integers $a_i$ are bounded by a polynomial in $\log n$~\cite{gareyjohnson}. {\gh In this section, we investigate the performance and mostly limitations of algebraic techniques for refuting feasibility of partition instances.}

Feasibility of a partition instance can always be certified by a short proof (the partition itself). However, unless P=co-NP, we do not expect to always have short certificates of infeasibility.
%
Nevertheless, we can try to look for such a certificate through a sum of squares decomposition. Indeed, given an instance $a\mathrel{\mathop{:}}=\{a_1,\ldots,a_n\}$, {\gh it is not hard to see\footnote{ This equivalence is apparent in view of the zeros of the polynomial on the right hand side of (\ref{eq:partition.no.eps}) corresponding to a feasible partition.} that} the following equivalence holds:

\begin{align} \label{eq:partition.no.eps}
{\gh \begin{bmatrix} a \text{ is an infeasible }\\ \text{ partition instance} \end{bmatrix}} \Leftrightarrow p_a(x)\mathrel{\mathop{:}}=\sum_i (x_i^2-1)^2+ (\sum_i a_ix_i)^2>0,~ \forall x\in\mathbb{R}^n.
\end{align}

So if for some $\epsilon >0$ we could prove that $p_a(x)-\epsilon$ is nonnegative, we would have refuted the feasibility of our partition instance.


\begin{definition}\label{def:sos.refut}
An instance of partition $a_1,\ldots,a_n$ is said to be \emph{sos-refutable} if there exists $\epsilon>0$ such that $p_a(x)-\epsilon$ is sos. 
\end{definition}
Obviously, any instance of partition that is sos-refutable is infeasible. This suggests that we can consider solving the following semidefinite program


\begin{equation} \label{eq:sospartition}
\begin{aligned}
SOS &\mathrel{\mathop{:}}=\max \ \epsilon \\
&\text{s.t. } q_a(x) \mathrel{\mathop{:}}=p_a(x)-\epsilon \text{ is sos}
\end{aligned}
\end{equation}
and examining its optimal value. Note that the optimal value of this problem is always greater than or equal to zero as $p_a$ is sos by construction. If the optimal value is positive, we have succeeded in proving infeasibility of the partition instance $a$.


We would like to define the notions of \emph{dsos-refutable} and \emph{sdsos-refutable} instances analogously by replacing the condition $q_a(x)$ sos by the condition $q_a(x)$ dsos or sdsos. Though (\ref{eq:sospartition}) is guaranteed to always be feasible by taking $\epsilon =0$, this is not necessarily the case for dsos/sdsos versions of (\ref{eq:sospartition}). For example, the optimization problem 
\begin{align}\label{eq:dsos.partition.nonhom}
\max_\epsilon\{\epsilon ~|~ p_a(x) -\epsilon  \text{ dsos}\}
\end{align}
 on the instance $\{1,2,2,1,1\}$ is infeasible.\footnote{Under other structures on a polynomial, the same type of problem can arise for sos. For example, consider the Motzkin polynomial~\cite{MotzkinSOS} $M(x_1,x_2)=x_1^2x_2^4+x_2^2x_1^4-3x_1^2x_2^2+1$  which is nonnegative everywhere. The problem $\max_\epsilon\{\epsilon ~|~ M(x) -\epsilon  \text{     sos}\}$ is infeasible.} This is a problem for us as we need the first LP to be feasible to start our iterations. We show, however, that we can get around this issue by modeling the partition problem with homogeneous polynomials.


\begin{definition}
Let $p_a$ be as in (\ref{eq:partition.no.eps}). An instance of partition $a_1,\ldots,a_n$ is said to be \emph{dsos-refutable} (resp. \emph{sdsos-refutable}) if there exists $\epsilon>0$ such that the quartic form
\begin{align}\label{eq:homog}
q_{a,\epsilon}^h(x)\mathrel{\mathop{:}}=p_a \left( \frac{x}{\left(\frac{1}{n}\sum_i x_i^2\right)^{1/2} }\right)\left( \frac{1}{n}\sum_i  x_i^2\right)^2-\epsilon \left( \frac{1}{n} \sum_i x_i^2 \right)^2
\end{align}
is dsos (resp. sdsos).
\end{definition}
Notice that $q_{a,\epsilon}^h$ is indeed a polynomial as it can be equivalently written as
$$ \sum_i x_i^4 +\left(\left (\sum_i a_ix_i\right)^2-2\sum_i x_i^2 \right) \cdot \left( \frac{1}{n}\sum_i  x_i^2\right) +(n-\epsilon) \cdot \left( \frac{1}{n}\sum_i  x_i^2\right)^2.$$ 

What we are doing here is homogenizing a polynomial that does not have odd monomials by multiplying its lower degree monomials with appropriate powers of $\sum_i  x_i^2$. The next theorem tells us how we can relate nonnegativity of this polynomial to feasibility of partition.


\begin{theorem} \label{thm:valid.homog}
A partition instance $a=\{a_1,\ldots,a_n\}$ is infeasible if and only if there exists $\epsilon>0$ for which the quartic form $q_{a,\epsilon}^h(x)$ defined in (\ref{eq:homog}) is nonnegative.
\end{theorem}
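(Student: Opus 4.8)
\noindent\emph{Proof proposal.} The plan is to reduce the statement to a fact about the values of $p_a$ on a single sphere. First I would record two elementary observations about $q_{a,\epsilon}^h$ that are essentially already in the excerpt. (i) It is a bona fide quartic \emph{form} (homogeneous of degree $4$), as exhibited by the polynomial expansion displayed right after (\ref{eq:homog}); in particular $q_{a,\epsilon}^h(tx)=t^4 q_{a,\epsilon}^h(x)$ and $q_{a,\epsilon}^h(0)=0$. (ii) On the sphere $S:=\{x\in\mathbb{R}^n:\sum_i x_i^2=n\}$ one has $\frac{1}{n}\sum_i x_i^2=1$, and substituting this into that expansion collapses it to $q_{a,\epsilon}^h(x)=p_a(x)-\epsilon$ for every $x\in S$. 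Since every nonzero $x$ is a positive multiple of a point of $S$ and $q_{a,\epsilon}^h(0)=0$, homogeneity gives: $q_{a,\epsilon}^h$ is nonnegative on $\mathbb{R}^n$ if and only if $p_a(x)\geq\epsilon$ for all $x\in S$.

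Granting this reduction, the theorem becomes: $a$ is infeasible if and only if $p_a(x)\geq\epsilon$ on $S$ for some $\epsilon>0$, equivalently $\min_{x\in S}p_a(x)>0$ (the minimum being attained because $S$ is compact and $p_a$ is continuous). For the forward implication I would invoke the equivalence (\ref{eq:partition.no.eps}): if $a$ is infeasible then $p_a(x)>0$ for all $x\in\mathbb{R}^n$, hence in particular $\delta:=\min_{x\in S}p_a(x)>0$, and taking $\epsilon=\delta$ makes $q_{a,\epsilon}^h$ nonnegative by the reduction above.

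For the converse I would argue by contraposition. If $a$ is feasible, take a partition of $\{a_1,\dots,a_n\}$ into two sets of equal sum and let $x^\star\in\{-1,+1\}^n$ be the corresponding sign vector, so that $\sum_i a_i x_i^\star=0$; then $p_a(x^\star)=\sum_i((x_i^\star)^2-1)^2+(\sum_i a_i x_i^\star)^2=0$, and $\sum_i(x_i^\star)^2=n$, so $x^\star\in S$. Hence $q_{a,\epsilon}^h(x^\star)=p_a(x^\star)-\epsilon=-\epsilon<0$ for \emph{every} $\epsilon>0$, so no positive $\epsilon$ can make $q_{a,\epsilon}^h$ nonnegative, which is the contrapositive of what we want.

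There is no genuine obstacle here; the only steps deserving a little care are verifying, from the expansion after (\ref{eq:homog}), that $q_{a,\epsilon}^h$ really is a form (so that ``nonnegative on $\mathbb{R}^n$'' may be tested on the single sphere $S$, on which $q_{a,\epsilon}^h$ coincides with $p_a-\epsilon$), and using compactness of $S$ to pass from ``$p_a>0$ on $S$'' to ``$p_a\geq\epsilon$ on $S$ for some $\epsilon>0$'' — which is exactly the source of the strictly positive $\epsilon$ that the theorem asks for.
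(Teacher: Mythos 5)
Your proposal is correct and follows essentially the same route as the paper's proof: the feasible direction uses the $\pm 1$ sign vector on the sphere $\sum_i x_i^2=n$ (where $q_{a,\epsilon}^h=p_a-\epsilon$) to produce the value $-\epsilon<0$, and the infeasible direction uses compactness of that sphere to extract a positive minimum of $p_a$ and homogeneity to propagate nonnegativity to all of $\mathbb{R}^n$. Your write-up is if anything slightly more explicit about the reduction to the sphere, but there is no substantive difference.
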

\begin{proof}
For ease of reference, let us define 
\begin{align} \label{eq:def.homog.no.eps}
p_a^h(x) \mathrel{\mathop{:}}=p_a \left( \frac{x}{\left(\frac{1}{n}\sum_i x_i^2\right)^{1/2} }\right)\left( \frac{1}{n}\sum_i  x_i^2\right)^2.
\end{align}

Suppose partition is feasible, i.e, the integers $a_1,\ldots,a_n$ can be placed in two sets $\mathcal{S}_1$ and $\mathcal{S}_2$ with equal sums. Let $\bar{x}_i$=1 if $a_i$ is placed in set $\mathcal{S}_1$ and $\bar{x}_i=-1$ if $a_i$ is placed in set $\mathcal{S}_2$. Then $||\bar{x}||_2^2=n$ and $p_a(\bar{x})=0$. This implies that $$p_a^h(\bar{x})=p_a(\bar{x})=0,$$  and hence having $\epsilon>0$ would make $$q_{a,\epsilon}^h(\bar{x})=-\epsilon<0.$$


 If partition is infeasible, then $p_a(x)>0,~ \forall x\in\mathbb{R}^n$. In view of (\ref{eq:def.homog.no.eps}) we see that $p_a^h(x)>0$ on the sphere $\mathbb{S}$ of radius $n$. Since $p_a^h$ is continuous, its minimum $\hat{\epsilon}$ on the compact set $\mathbb{S}$ is achieved and must be positive. So we must have


$$q_{a,\hat{\epsilon}}^h(x)=p_a^h(x)-\hat{\epsilon} \left( \frac{1}{n} \sum_i x_i^2 \right)^2 \geq 0, \forall x \in \mathbb{S}.$$
By homogeneity, this implies that $q_{a,\hat{\epsilon}}^h$ is nonnegative everywhere.
%
\end{proof}

Consider now the LP
\begin{equation} \label{eq:dsos.nh}
\begin{aligned}
&\max_{\epsilon}\  \epsilon\\
&\text{s.t. } q_{a,\epsilon}^h(x) \ \text{dsos.}
\end{aligned}
\end{equation}
\begin{theorem}\label{thm:feas.dsos}
The LP in (\ref{eq:dsos.nh}) is always feasible.
\end{theorem}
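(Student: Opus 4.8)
The plan is to produce a single feasible value of $\epsilon$ by letting $\epsilon$ tend to $-\infty$. Using the explicit polynomial expression for $q_{a,\epsilon}^h$ displayed just after Definition stating (\ref{eq:homog}), put
$$B(x)\mathrel{\mathop:}=\sum_i x_i^4+\left(\left(\sum_i a_ix_i\right)^2-2\sum_i x_i^2\right)\cdot\left(\frac{1}{n}\sum_i x_i^2\right),$$
which does not depend on $\epsilon$, so that $q_{a,\epsilon}^h(x)=B(x)+\frac{n-\epsilon}{n^2}\bigl(\sum_i x_i^2\bigr)^2$. The point is that the scalar $\frac{n-\epsilon}{n^2}$ multiplying $\bigl(\sum_i x_i^2\bigr)^2$ can be made arbitrarily large and positive by taking $\epsilon$ sufficiently negative.

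Next I would pass to Gram matrices in the vector $z\mathrel{\mathop:}=z(x,2)$ of degree-$2$ monomials, which is the basis relevant to Definition~\ref{def:dsos.sdsos.rdsos.rsdsos} for quartic forms. Every quartic form admits some symmetric (not necessarily psd) Gram matrix, obtained by distributing its coefficients; fix one such $Q_B$ with $B(x)=z^TQ_Bz$. The key observation is that
$$\left(\sum_i x_i^2\right)^2=\sum_i x_i^4+2\sum_{i<j}x_i^2x_j^2=z^TDz,$$
where $D$ is the \emph{diagonal} matrix carrying a $1$ in each entry indexed by $x_i^2$ and a $2$ in each entry indexed by $x_ix_j$; in particular every diagonal entry of $D$ is at least $1$. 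Hence $q_{a,\epsilon}^h(x)=z^T\bigl(Q_B+\frac{n-\epsilon}{n^2}D\bigr)z$.

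Finally I would choose $\epsilon$ negative enough that $Q_B+\frac{n-\epsilon}{n^2}D$ is diagonally dominant. Since $D$ is diagonal with all diagonal entries $\ge 1$, adding $\frac{n-\epsilon}{n^2}D$ raises each diagonal entry of $Q_B$ by at least $\frac{n-\epsilon}{n^2}$ while leaving the off-diagonal entries unchanged; so as soon as $\frac{n-\epsilon}{n^2}\ge\max_k\bigl(\sum_{l\ne k}|(Q_B)_{kl}|-(Q_B)_{kk}\bigr)$, all the diagonal-dominance inequalities $a_{kk}\ge\sum_{l\ne k}|a_{kl}|$ hold. For any such $\epsilon$ the form $q_{a,\epsilon}^h$ is dsos by Definition~\ref{def:dsos.sdsos.rdsos.rsdsos}, so (\ref{eq:dsos.nh}) is feasible. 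There is no genuine obstacle here; the one point worth isolating is that membership in the dsos cone can be forced by inflating a \emph{strictly positive diagonal} block of a Gram matrix, and $\bigl(\sum_i x_i^2\bigr)^2$ provides exactly such a block — which is precisely why the homogenization in (\ref{eq:homog}) was introduced, so that the ``free'' term is a perfect power of $\sum_i x_i^2$ rather than the lower-order terms that rendered (\ref{eq:dsos.partition.nonhom}) infeasible.
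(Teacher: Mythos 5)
Your proof is correct and follows essentially the same route as the paper's: both write $q_{a,\epsilon}^h$ as a fixed quartic plus an $\epsilon$-dependent multiple of $\bigl(\sum_i x_i^2\bigr)^2$, observe that the latter admits a Gram matrix in the strict interior of the diagonally dominant cone (you make this explicit by choosing the positive diagonal matrix $D$), and drive $\epsilon$ negative enough to force diagonal dominance of the combined Gram matrix. The paper phrases the last step as a convex combination $\lambda Q+(1-\lambda)Q_h$ being dd for small $\lambda>0$, which is equivalent to your explicit threshold on $\frac{n-\epsilon}{n^2}$.
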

\begin{proof}
Let $h(x)\mathrel{\mathop:}=\left(\frac{1}{n}\sum_ix_i^2\right)^2$ and recall that $z(x,2)$ denotes the vector of all monomials of degree exactly $2$. We can write $$h(x)=z^T(x,2)Q_hz(x,2)$$ where $Q_h$ is in the strict interior of the $DD_n$ cone (i.e., its entries $q_{ij}$ satisfy $q_{ii}>\sum_j |q_{ij}|, \forall i$). Furthermore, let $Q$ be a symmetric matrix such that $p_a^h(x)=z(x,2)^TQz(x,2).$ Then
$$q_{a,\epsilon}^h(x)=p_a^h(x)-\epsilon h(x)=z(x,2)^T(Q-\epsilon Q_h)z(x,2).$$
As $Q_h$ is in the strict interior of $DD_n$, $\exists \lambda>0$ such that $$\lambda Q+(1-\lambda) Q_h \text{ is dd.}$$ Taking $\epsilon=-\frac{1-\lambda}{\lambda}$, $Q-\epsilon Q_h$ will be diagonally dominant and $q_{a,\epsilon}^h$ will be dsos. 
\end{proof}
As an immediate consequence, the SOCP
\begin{equation} \label{eq:sdsos.nh}
\begin{aligned}
&\max_{\epsilon} \ \epsilon\\
&\text{s.t. } q_{a,\epsilon}^h(x) \text{ sdsos}
\end{aligned}
\end{equation}
is also always feasible. We can now define our sequence of LPs and SOCPs as we have guaranteed feasibility of the first iteration. This is done following the strategy and notation of Section \ref{subsec:polyopt}:
\begin{equation}\label{eq:LP.sequence}
\begin{aligned}
DSOS_k \text{ (resp. $SDSOS_k$)} &\mathrel{\mathop{:}}= \max_{\epsilon} \ \epsilon\\
&\text{s.t. } q_{a,\epsilon}^h(x) \in DSOS(U_k) \text{ (resp. $SDSOS(U_k)$)},
\end{aligned}
\end{equation}
where $\{U_k\}$ is a sequence of matrices recursively defined with $U_0=I$ and $U_{k+1}$ defined as the Cholesky factor of an optimal dd (resp. sdd) Gram matrix of the optimization problem in iteration $k$.

%

We illustrate the performance of these LP and SOCP-based bounds on the infeasible partition instance $\{1,2,2,1,1\}$. The results are in Figure \ref{fig:instance.12211}. We can use the sum of squares relaxation to refute the feasibility of this instance by either solving (\ref{eq:sospartition}) (the ``non-homogenized version'') or solving (\ref{eq:dsos.nh}) with dsos replaced with sos (the ``homogenized version''). Both approaches succeed in refuting this partition instance, though the homogenized version gives a slightly better (more positive) optimal value. As a consequence, we only plot the homogeneous bound, denoted by $SOS_h$, in Figure~\ref{fig:instance.12211}. Notice that the LP and SOCP-based sequences refute the instance from the $6^{th}$ iteration onwards.

\begin{figure}[h!]
	\begin{center}
		\mbox{
			\subfigure[Bounds $SOS_h$, $DSOS_k$ and $SDSOS_k$]
			{\label{subfig:Partition}\scalebox{0.45}{\includegraphics{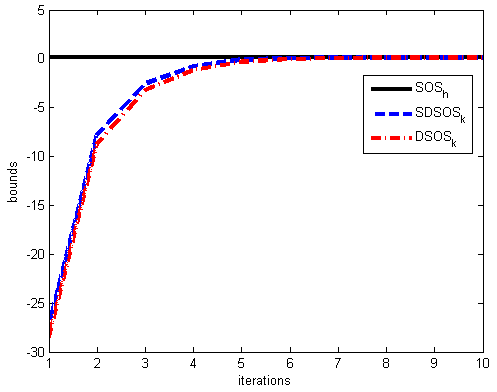}}}}
		\mbox{
			\subfigure[Zoomed-in version of Figure \ref{subfig:Partition}]
			{\label{subfig:PartitionZoom}\scalebox{0.45}{\includegraphics{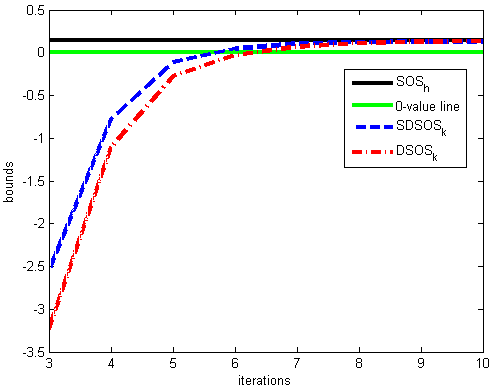}}}
		}
		
		\caption{Bounds obtained on the \{1,2,2,1,1\} instance of the partition problem using SDP, as well as the LP/SOCP-based sequences}
		\label{fig:instance.12211}
	\end{center}
\end{figure}

As our final experiment, we generate 50 infeasible instances of partition with 6 elements randomly generated between 1 and 15. These instances are \emph{trivially infeasible} because we made sure that $a_1+\cdots+a_6$ is an odd number. 
In the first column of Table~\ref{tab:partition.success}, we count the number of successes for sos-refutability (non homogeneous version as defined in Definition \ref{def:sos.refut}), where a failure is defined as the optimal value of (\ref{eq:sospartition}) being 0 up to numerical precision. The second column corresponds to the number of successes for sos-refutability (homogeneous version). The last 4 columns show the success rate of the LP and SOCP-based sequences as defined in (\ref{eq:LP.sequence}), after 20 iterations and 40 iterations.
\begin{table}[H]
\centering
\begin{tabular}{|c|c|c|c|c|c|}
\hline
$SOS$ & $SOS_{h}$ & $DSOS_{20}$ & $DSOS_{40}$ & $SDSOS_{20}$ & $SDSOS_{40}$\\
\hline
56\% & 56\% & 12\% & 16 \% & 14\% & 14\%\\
\hline
\end{tabular}
\caption{Rate of success for refutability of infeasible instances of partition}
 \label{tab:partition.success}
\end{table}
From the experiments, the homogeneous and non-homogeneous versions of (\ref{eq:sospartition}) have the same performance in terms of their ability to refute feasibility. However, we observe that they both fail to refute a large number of completely trivial instances! We prove why this is the case for one representative instance in the next section. The LP and SOCP-based sequences also perform poorly and their convergence is much slower than what we observed for the maximum stable set problem in Section~\ref{sec:StableSet}.

\subsection{Failure of the sum of squares relaxation on trivial partition instances.} \label{subsec:sos.refutability}

For complexity reasons, one would expect there to be infeasible instances of partition that are not sos-refutable. What is surprising however is that the sos relaxation is failing on many instances that are totally trivial to refute as the sum of their input integers is odd. We present a proof of this phenomenon on an instance which is arguably the simplest one.\footnote{If we were to instead consider the instance [1,1,1], sos would succeed in refuting it.}


\begin{proposition} \label{th:infeas.partition.inst}
The infeasible partition instance $\{1,1,1,1,1\}$ is not sos-refutable.
\end{proposition}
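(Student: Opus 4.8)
The plan is to analyze the polynomial $q_{a,\epsilon}^h(x)$ (or equivalently the non-homogeneous $q_a(x) = p_a(x)-\epsilon$) for the instance $a=\{1,1,1,1,1\}$ and show that for every $\epsilon>0$ it fails to be a sum of squares. First I would write down $p_a$ explicitly: here $\sum_i a_i x_i = x_1+\cdots+x_5$, so
\[
p_a(x) = \sum_{i=1}^5 (x_i^2-1)^2 + \Bigl(\sum_{i=1}^5 x_i\Bigr)^2 = \sum_i x_i^4 - 2\sum_i x_i^2 + 5 + \sum_{i,j} x_i x_j.
\]
By the symmetry of the instance under the full symmetric group $S_5$ acting by permuting coordinates (and under sign flips on the even part), any optimal $\epsilon$ and any Gram matrix witnessing an sos decomposition of $q_a$ can be averaged to one that is $S_5$-invariant; this reduces the search for a psd Gram matrix to a low-dimensional space of invariant matrices, which one can parametrize by a handful of scalars. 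The key step is then to exhibit an explicit point $x^\star\in\mathbb{R}^5$ (or a short list of such points) with $p_a(x^\star)$ small — ideally a point where $p_a$ attains a value $\le \epsilon_0$ for arbitrarily small $\epsilon_0$ is too much to hope for, so instead I expect the real mechanism to be the one foreshadowed in the abstract and Section~\ref{sec:Partition}: the polynomial $p_a - \epsilon^\star$ at the *optimal* $\epsilon^\star$ is strictly positive but lies on the *boundary* of the sos cone, so $p_a-\epsilon$ is not sos for any $\epsilon>\epsilon^\star$, and one must show $\epsilon^\star$ itself gives a non-sos (boundary) polynomial, hence "not sos-refutable" fails only in the sense that no *strictly positive* slack works — wait, that would make it sos-refutable.

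Let me restate the mechanism correctly. The claim is that there is NO $\epsilon>0$ with $p_a-\epsilon$ sos. So I would show directly that $p_a - \epsilon$ is not sos for every $\epsilon>0$, by producing a *dual certificate*: a linear functional $L$ on polynomials of degree $\le 4$ (equivalently a pseudo-moment sequence / a psd moment matrix candidate that we show is actually in the dual of $SOS$) with $L(\sigma)\ge 0$ for all sos $\sigma$ of degree $\le 4$, with $L(1)=1$, and with $L(p_a) = 0$. Such an $L$ certifies that $p_a$ is on the boundary of the sos cone in the relevant affine slice and that moving in the direction $-1$ (i.e. subtracting $\epsilon$) immediately leaves the cone, because $L(p_a - \epsilon) = -\epsilon < 0$ while every sos polynomial $\sigma$ has $L(\sigma)\ge 0$. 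Concretely I would build $L$ from the feasible partition-like points of the *relaxation*: even though $\{1,1,1,1,1\}$ has no genuine partition (5 is odd), the quadratic module / moment relaxation "does not see" the integrality, and one can take a convex combination of evaluation functionals at the points $\bar x$ with entries $\pm1$ that make $\sum \bar x_i$ as small as possible in absolute value, namely $|\sum \bar x_i| = 1$, at which $p_a(\bar x) = 0 + 1 = 1 \ne 0$ — so pure $\pm1$ points do not give $L(p_a)=0$. Hence the support of $L$ must include non-$\pm1$ points; the natural candidates are the continuous minimizers of $p_a$, which by the $S_5$-symmetry and Lagrange conditions lie on the line $x_1=\cdots=x_5=t$ or on orbits thereof. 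I would compute $\min p_a$: on the diagonal, $p_a = 5(t^2-1)^2 + 25 t^2$, minimized at $t=0$ giving value $5$; off-diagonal stationary points and the zero-$\pm1$-type points give value $1$; so in fact $\min_{x} p_a(x)$ over all of $\mathbb{R}^5$ needs to be found carefully — this min equals the true optimal $\epsilon$ only if the problem were "find $\max\epsilon$ s.t. $p_a-\epsilon\ge0$", which is a *different* (and easier, nonnegativity) problem. The sos problem optimal $\epsilon^\star$ can be strictly less.

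So the actual structure I would follow: (1) Use $S_5$-invariance to reduce the sos feasibility of $p_a-\epsilon$ to feasibility of a small semidefinite program in invariant Gram matrices $Q$ block-diagonalized by the isotypic decomposition of the degree-$\le2$ monomial space under $S_5$ (trivial rep, standard rep, etc.); the constant, linear, and quadratic monomials split into low-dimensional blocks, and "$p_a-\epsilon = z^TQz$" becomes a handful of linear equations in the entries of these blocks. (2) Solve this tiny SDP by hand: show that the linear constraints coming from matching coefficients of $p_a-\epsilon$ force one of the small blocks to have a fixed trace/entries that make its determinant (or a $2\times2$ minor) equal to a quantity that is $\le 0$ for all $\epsilon>0$, with equality only at $\epsilon=0$; this will pin down $\epsilon^\star=0$ and show no $\epsilon>0$ is feasible. (3) Equivalently and more cleanly, package the $\epsilon=0$ certificate as an explicit dual functional $L$ (an infeasible pseudo-moment sequence) proving $p_a$ is extreme/on the boundary of $SOS_{5,4}$ in the affine slice $\{L(\cdot)\text{ fixed}\}$; spelling out $L$ makes the write-up self-contained and matches the paper's remark that "the reason for this failure is existence of a certain family of quartic polynomials that are nonnegative but not sums of squares." The main obstacle I anticipate is step (2)/(3): correctly carrying out the $S_5$-isotypic block-diagonalization for degree-$2$ monomials in $5$ variables (the quadratic monomials $x_ix_j$ decompose into several irreducibles, including a $2$-dimensional multiplicity space for the trivial rep — spanned by $\sum x_i^2$ and $\sum_{i\ne j} x_ix_j$ — whose associated $2\times2$ or $3\times3$ Gram block is exactly where positivity must fail), and then reading off from that block the sharp inequality that rules out $\epsilon>0$. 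Getting that single block right, and verifying its psd-ness is incompatible with $\epsilon>0$, is the crux; everything else is bookkeeping with the $S_5$ symmetry and coefficient matching.
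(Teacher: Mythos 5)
Your final plan --- exhibit a linear functional $L$ with $L(p_a)=0$, $L(q)\ge 0$ for every sos quartic $q$, and hence $L(p_a-\epsilon)=-\epsilon<0$ --- is exactly the strategy of the paper's proof. But the proposal stops precisely where the proof has to start: you never produce $L$. The entire mathematical content of the argument is the explicit functional (the paper gives a reduced vector $\mu$ with entries $1$, $\mathbf{1}_5$, $-\tfrac14$, $\mathbf{1}_5$ on the monomials appearing in $p_a$) together with the verification that its $21\times 21$ localizing/moment matrix $M=(zz^T)|_\mu$, with off-diagonal entries $a=\tfrac38$ and $b=-\tfrac14$, is positive semidefinite --- that psd check is what makes $L$ nonnegative on the sos cone via $\langle\mu,\overrightarrow{q}\rangle=\mathrm{Tr}\,Q\cdot M\ge 0$. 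You yourself flag this step as ``the crux'' and ``the main obstacle,'' and you correctly observe that $L$ cannot be a convex combination of point evaluations (since $p_a>0$ everywhere, any such $L$ would give $L(p_a)>0$). That observation makes the gap worse, not better: $L$ must be an element of $SOS_{5,4}^*$ that is \emph{not} a true moment functional, and the existence of such an $L$ annihilating $p_a$ is essentially equivalent to the proposition itself (it is the statement that $p_a$ lies on the boundary of the sos cone while being strictly positive). Asserting that such an $L$ exists without constructing it is therefore a restatement of the claim, not a proof of it.

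The alternative route you sketch --- $S_5$-symmetry reduction of the Gram matrix, block-diagonalization into isotypic components, and a by-hand infeasibility argument for the resulting small SDP when $\epsilon>0$ --- is a genuinely different and viable path (and symmetry-averaging of the Gram matrix is legitimate since the sos cone is convex and $p_a$ is $S_5$-invariant). It would likely yield a cleaner write-up than the paper's brute-force $21\times21$ matrix. But it too is left entirely unexecuted: no blocks are computed, no coefficient-matching equations are written down, and no inequality incompatible with $\epsilon>0$ is derived. As it stands, the proposal is a correct identification of two workable strategies, with the decisive computation missing from both.
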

\begin{proof}
Let $p_a$ be the polynomial defined in (\ref{eq:partition.no.eps}). To simplify notation, we let $p(x)$ represent $p_a(x)$ for $a=\{1,1,1,1,1\}$. We will show that $p$ is on the boundary of the SOS cone even though we know it is strictly inside the PSD cone. This is done by presenting a dual functional $\mu$ that vanishes on $p,$ takes a nonnegative value on all quartic sos polynomials, and a negative value on $p(x)-\epsilon$ for any $\epsilon>0.$ (See Figure~\ref{fig:proof.partition} for an intuitive illustration of this.)

\begin{figure}[H]
\centering
\includegraphics[scale=0.5]{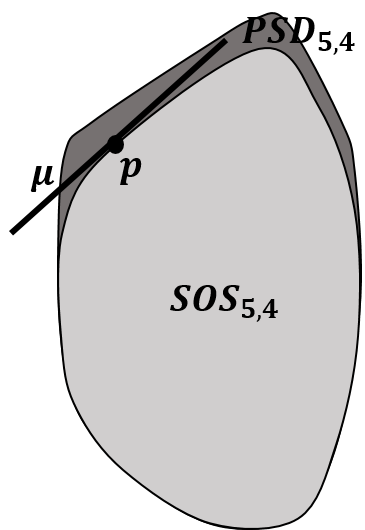}
\caption{The geometric idea behind the proof of Proposition \ref{th:infeas.partition.inst}}
\label{fig:proof.partition}
\end{figure}


The polynomial $p$ when expanded out reads
\begin{align} \label{eq:p.expanded}
p(x)=n-\sum_i x_i^2+2\sum_{i<j} x_i x_j +\sum_i x_i^4.
\end{align}
Consider the vector of coefficients of $p$ with the ordering as written in (\ref{eq:p.expanded}):
\setcounter{MaxMatrixCols}{21}
\scalefont{0.6}
\begin{align} \label{eq:pvec}
\overrightarrow{p}=\begin{pmatrix} 5&-1&-1&-1&-1&-1&2&2&2&2&2&2&2&2&2&2&-1&-1&-1&-1&-1
\end{pmatrix}.
\end{align}
\normalsize
This is a reduced representation of the vector of coefficients of $p$, in that there are many zeros associated with all other monomials of degree less than or equal to 4, which we are not writing out.


Our goal is to find a vector $\mu$ that satisfies
\begin{align}
\langle \mu, \overrightarrow{p} \rangle &=0 \nonumber \\
\langle \mu,\overrightarrow{q} \rangle &\geq 0, \text{ for all $q$ sos of degree 4}. \label{eq:inner.prod.sos.zero}
\end{align}
If such a $\mu$ exists and its first element is nonzero (which by rescaling can then be taken to be 1), then $\langle \mu, \overrightarrow{p-\epsilon} \rangle=\langle \mu,\overrightarrow{p} \rangle -\langle \mu, \overrightarrow{\epsilon} \rangle=-\epsilon<0$. This provides us with the required functional that separates $p(x)-\epsilon$ from the set of sos polynomials.

Selecting the same reduced basis as the one used in (\ref{eq:pvec}), we take
\begin{align*}
\overrightarrow{\mu_{reduced}}=\begin{pmatrix} 1 & \textbf{1}_5^T & -\frac{1}{4} \cdot \textbf{1}_1^T & \textbf{1}_5^T 
\end{pmatrix}
\end{align*}
where $\textbf{1}_n$ is the all ones vector of size $n$. The subscript ``reduced'' denotes the fact that in $\overrightarrow{\mu_{reduced}}$, only the elements of $\mu$ needed to verify $\langle \mu, \overrightarrow{p}\rangle=0$ are presented. Unlike $\overrightarrow{p}$, the entries of $\mu$ corresponding to the other monomials are not all zero. This can be seen from the entries of the matrix $M$ that appears further down.

We now show how (\ref{eq:inner.prod.sos.zero}) holds. Consider any sos polynomial $q$ of degree less than or equal to 4. We know that it can be written as $$q(x)=z^TQz=\mbox{Tr}~ Q \cdot zz^T,$$ for some $Q \succeq 0$, and a vector of monomials 
$$z^T=[1,x_1,x_2,\ldots,x_n,x_1^2,\ldots,x_n^2,x_1x_2, \ldots, x_{n-1}x_n].$$
It is not difficult to see that $$ \langle \mu, \overrightarrow{q} \rangle = \mbox{Tr}~ Q \cdot (zz^T)|_\mu$$
where by $(zz)^T|_\mu$, we mean a matrix where each monomial in $zz^T$ is replaced with the corresponding element of the vector $\mu$. This yields the matrix
\scalefont{1}
\[ M=\begin{pmatrix}
   1&    0&            0 &       0        &     0      &       0         &    1          &   1    &         1   &           1    &         1    &       b &          b &          b &          b &          b &          b &          b &          b &          b &          b  \\    
       0         &    1  &         b &          b &          b &          b &            0     &        0        &     0      &       0     &        0   &          0 &             0       &      0   &          0    &         0        &     0       &      0     &        0       &      0     &        0      \\
       0    &       b &            1 &          b &          b &          b &            0 &            0    &         0      &       0         &    0   &          0            & 0 &            0        &     0          &   0       &      0     &        0   &           0   &           0        &     0      \\
       0   &        b &          b &            1      &     b &          b &            0       &      0          &   0      &       0       &      0       &      0&             0            & 0            & 0           &  0          &   0           &  0          &   0          &   0         &    0      \\
       0  &         b &          b &          b &            1   &        b &            0   &          0   &          0   &          0     &        0  &           0 &            0         &    0          &   0      &       0      &       0        &     0         &    0            & 0         &    0      \\
       0    &       b &          b &          b &          b &            1  &           0     &        0   &          0       &      0  &           0       &      0          &   0        &     0           &  0       &      0          &   0  &           0     &        0        &     0           &  0      \\
       1   &          0   &          0     &        0        &     0      &       0    &         1         &    1       &      1  &           1       &      1  &         b &          b &          b &          b &          b &          b &          b &          b &          b &          b \\     
       1      &       0          &   0        &     0     &        0        &     0     &        1     &        1         &    1       &      1       &      1        &   b &          b &          b &          b &          b &          b &          b &          b &          b &          b    \\  
       1      &       0        &     0    &         0     &        0    &         0       &      1         &    1      &       1           &  1            & 1           &b &          b &          b &          b &          b &          b &          b &          b &          b &          b  \\
       1       &      0       &      0     &        0      &       0      &       0      &       1        &     1        &     1  &           1        &     1    &       b &          b &          b &          b &          b &          b &          b &          b &          b &          b    \\
       1    &         0       &      0      &       0     &        0      &       0    &         1       &      1     &        1     &        1        &     1       &    b &          b &          b &          b &          b &          b &          b &          b &          b &          b    \\
     b &            0    &         0   &          0     &        0         &    0      &     b &          b &          b &          b &          b &            1        &   b &          b &          b &          b &          b &          b &           a &           a &           a\\     
     b &            0    &         0  &           0      &       0      &       0      &     b &          b &          b &          b &          b &          b &            1     &      b &          b &          b &           a &           a &          b &          b &           a   \\  
     b &            0  &           0        &     0        &     0          &   0       &    b &          b &          b &          b &          b &          b &          b &            1    &       b &           a &          b &           a &          b &           a &          b      \\
     b &            0          &   0            & 0      &       0     &        0        &   b &          b &          b &          b &          b &          b &          b &          b &            1          &  a &           a &          b &           a &          b &          b     \\
     b &            0   &          0      &       0       &      0       &      0       &    b &          b &          b &          b &          b &          b &          b &           a &           a &            1        &   b &          b &          b &          b &           a      \\
     b &            0     &        0       &      0      &       0            & 0 &          b &          b &          b &          b &          b &          b &           a &          b &           a &          b &            1    &       b &          b &           a &          b     \\
     b &            0       &      0    &         0   &          0   &          0        &   b &          b &          b &          b &          b &          b &           a &           a &          b &          b &          b &            1     &       a &          b &          b   \\
     b &            0       &      0        &     0      &       0          &   0   &        b &          b &          b &          b &          b &           a &          b &          b &           a &          b &          b &           a &            1       &    b &          b    \\
     b &            0         &    0      &       0      &       0      &       0    &       b &          b &          b &          b &          b &           a &          b &           a &          b &          b &           a &          b &          b &            1        &   b      \\
     b &            0     &        0      &       0     &        0      &       0  &         b &          b &          b &          b &          b &           a &           a &          b &          b &           a &          b &          b &          b &          b &            1      \\
\end{pmatrix},
\]
\normalsize
where $a=\frac{3}{8}$ and $b=-\frac{1}{4}$. We can check that $M \succeq 0$. This, together with the fact that $Q\succeq 0,$ implies that (\ref{eq:inner.prod.sos.zero}) holds.\footnote{It can be shown in a similar fashion that $\{1,1,1,1,1\}$ is not sos-refutable in the homogeneous formulation of (\ref{eq:homog}) either.}
\end{proof}

{\gh
\subsection{Open problems}

We showed in the previous subsection that the infeasible partition instance $\{1,1,1,1,1\}$ was not sos-refutable. Many more randomly-generated partition instances that we knew to be infeasible (their sum being odd) also failed to be sos-refutable. This observation motivates the following open problem:


\paragraph{Open Problem 1} Characterize the set of partition instances $\{a_1,\ldots,a_n\}$ that have an odd sum but are not sos-refutable (see Definition~\ref{def:sos.refut}).

Our second open problem has to do with the power of higher order sos relaxations for refuting feasibility of partition instances.
\paragraph{Open Problem 2} For a positive integer $r$, let us call a partition instance $\{a_1, \ldots, a_n\}$ \emph{$r$-sos-refutable} if $\exists \epsilon>0$ such that $(p(x)-\epsilon)(\sum_i x_i^2+1)^r$ is sos. Note that this is also a certificate of infeasibility of the instance. Even though the $\{1,1,1,1,1\}$ instance is not sos-refutable, it is $1$-sos-refutable. Furthermore, we have numerically observed that the instance $\{1,1,1,1,1,1,1\}$ (vector of all ones of length 7) is not sos-refutable or $1$-sos-refutable, but it is $2$-sos-refutable. If we consider the instance consisting of $n$ ones with $n$ odd, and define $\tilde{r}$ to be the minimum $r$ such that $\{1,1,\ldots,1\}$ becomes $r$-sos-refutable, is it true that $\tilde{r}$ must grow with $n$?
}

\section{Acknowledgments} We are grateful to Anirudha Majumdar and Sanjeeb Dash for insightful discussions. 

\bibliographystyle{amsplain} 
\bibliography{pablo_amirali}

\end{document}